\documentclass [12pt]{amsart}
\usepackage{times}
\usepackage{amssymb,latexsym,amsmath,amsfonts, eucal}
\usepackage[usenames,dvipsnames,svgnames,table]{xcolor}

\newcommand\hlight[1]{\tikz[overlay, remember picture,baseline=-\the\dimexpr\fontdimen22\textfont2\relax]\node[rectangle,fill=white!50,rounded corners,fill opacity = 0.2,draw,thick,text opacity =1] {$#1$};}

\usepackage[hidelinks]{hyperref}

\newtheorem{theorem}{Theorem}[section]
\newtheorem{lemma}{Lemma}[section]
\newtheorem{remark}{Remark}[section]
\newtheorem{proposition}{Proposition}[section]
\newtheorem{definition}{Definition}[section]
\newtheorem{example}{Example}[section]

\newtheorem{corollary}{Corollary}[section]
\numberwithin{equation}{section}
\usepackage{tikz}
\usepackage{float}
\usepackage{booktabs}
\usepackage{pgfplots}
\usepgfplotslibrary{fillbetween}
\pgfplotsset{compat=1.18}
\usetikzlibrary{shapes,arrows,positioning}
\usetikzlibrary{automata,arrows,positioning,calc}
\usepackage{enumitem}
\usepackage{subcaption}
\graphicspath{ {./images/} }

\title[ ]{Assouad type dimensions of generalized affine fractal interpolation functions and their applications}
\author[Shah]{Aaryan Dharmesh Shah}
\address{Aaryan Dharmesh Shah, Department of Mathematics, National Institute of Technology Rourkela, Rourkela-769008, India}
\email{aaryanshah1911121@gmail.com}
\author[Jha]{Sangita Jha}
\address{Sangita Jha, Department of Mathematics, National Institute of Technology Rourkela, Rourkela-769008, India}
\email{jhasa@nitrkl.ac.in, \textbf{Corresponding Author-Sangita Jha}} 

\subjclass[2010]{28A80, 35B41, 26A27, 47H10}

\keywords{Assouad spectrum, Assouad dimension, affine fractal functions, box-dimension, Weierstrass function, Takagi function}

\begin{document}
\begin{abstract}
In this article, we investigate the Assouad spectrum and Assouad dimension of
graphs of fractal functions generated by generalized affine iterated function
systems. We establish upper and lower bounds for the Assouad spectrum in terms
of the scaling functions and the underlying partition of the generalized
affine construction. If the scaling function is Lipschitz
continuous and the partition is uniform, we obtain an explicit expression
for the Assouad spectrum of the associated graph. These results provide a connection between the
parameters defining the generalized affine fractal function and the local multiscale geometry of its graph.

As applications, we consider two classic examples of generalized affine fractal functions, namely the Weierstrass and Takagi functions. For the
classical Weierstrass function $W$, whose graph  $\Gamma_W$ has the box dimension
$2+\log_N\lambda$, we obtain
\[
\dim_A^\theta(\Gamma_W)
\leq
\frac{2+\log_N\lambda-\theta}{1-\theta},
\qquad
\theta\in
\left(0,\log_N\frac{1}{\lambda}\right).
\]
and if $\lambda^2 N <1$, we get
\[
\dim_A(\Gamma_W)\geq 1 +\log_N\left(\frac{1}{\lambda}\right).
\]
For the classical Takagi function $T$ with graph $\Gamma_T$, we show that
\[
\dim_A^\theta(\Gamma_T)=1, \theta\in(0,1),
\]
and consequently its quasi-Assouad dimension is equal to $1.$ These results settle an open problem on dimension of graphs posed by Fraser \cite{Fraser2020}.
\end{abstract}
		\maketitle
		\tableofcontents
	
\section{Introduction}

A fundamental objective in fractal geometry is to quantify geometric complexity through appropriate notions of dimension. Although Hausdorff and box-counting dimensions characterise global scaling behaviour, they may fail to capture local irregularities, thereby motivating finer dimensional notions sensitive to local geometry and multiscale structure.

The Assouad dimension is a fundamental tool for quantifying local scaling behaviour. Introduced by Assouad in his 1977 Ph.D. thesis \cite{Assouad}, it captures the extremal local covering properties of a set across locations and scales. The Assouad spectrum, introduced by Fraser and Yu \cite{Fraser}, provides a one-parameter family of dimensions interpolating between the upper box dimension and the quasi-Assouad dimension. For a bounded set $F$, it describes how covering behaviour varies with the prescribed relation between scales, parametrised by $\theta\in(0,1)$. 
For certain fractal sets, the Assouad spectra have been explicitly computed, including overlapping self-affine sets \cite{FraserAffine}, Moran sets \cite{Moran}, stochastically self-similar sets \cite{Sascha}, and Kakaya set \cite{Wang}.

The fractal dimension of the graph of a function is an important measure of its irregularity, but determining it is often difficult. For self-affine FIFs on intervals, Hardin and Massopust \cite{Peter1}  and  Bedford \cite{Bedford} estimated the box dimension of the graph in terms of the vertical scaling factors and related it to the H\"older exponents of the function. B\'ar\'any, Simon, and Solomyak discussed a detailed study of the dimensions of self-similar and self-affine sets \cite{Simon}. While box dimension can often be estimated using scaling factors, Hausdorff dimension is more difficult to determine. 
These challenges motivate the study of generalized affine fractal functions, which provide a flexible framework for investigating finer dimensional properties, including the Assouad spectrum.

The Assouad dimension of graphs of classical fractal functions, particularly the Weierstrass and Takagi functions, remain an active area of research. While the upper box dimension of the Weierstrass graph has been known for several decades \cite{HuLau}, determining its Hausdorff dimension required substantially deeper analysis and the problem was settled by Shen \cite{Weierstrass} in full generality and for certain ranges of parameters by Bara\'{n}ski,  B\'{a}r\'{a}ny and Romanowska \cite{Weiertrass1}. However, we do not know the Assouad or quasi-Assouad dimensions of the graph yet. In particular, Fraser posed the following open problem:\\
\textbf{Question:}[Question 17.11.1]\cite{Fraser2020} What are the Assouad dimensions, quasi-Assouad dimensions, and Assouad spectra of the graphs of the Weierstrass and Takagi functions?\\
These remains an open problem. 
For classes of Takagi functions, partial results and bounds were obtained by Yu. Yu\cite[Theorem 1.1]{Yu2020} showed that the Assouad dimension of $G_T$ can exceed the box dimension. Also, he conjectured that the Assouad dimension is $2$ for all $\lambda\in (0,1)$ and $N\in(1/\lambda,\infty)\cap \mathbb{N})$. Further, Anttila, B\'{a}r\'{a}ny and K\"{a}enm\"{a}ki \cite{Roope}, later gave an exact formula for the general Takagi function of the form 
\[T_\lambda=\lambda^n\sum_{n=0}^{\infty}\text{Dist}(2^nx,\mathbb{Z}),\qquad \frac{1}{2}<\lambda<1,x\in \mathbb{Rb}\]
 showing that its Assouad dimension is strictly less than $2$. Thus, Yu's conjecture does not hold in this case.
 
 However, for the Weierstrass functions, no non-trivial upper or lower bounds for the Assouad dimension of $G_W$ are known.  Howroyd and Yu \cite{Yu1} showed that Assouad dimension of the graph of fractional Brownian motion is almost surely $2$, which tempts to guess that $G_W$ might also have Assouad dimension $2$.  Further support for this idea comes from Chrontsios-Garitsis and Tyson \cite{Tyson2025}, who showed that, after countably many reflections, any Weierstrass graph becomes the graph of a H\'older function with the same exponent and Assouad dimension $2$.  Recently, Feng and Fraser \cite{Feng} also showed that typical H\"older functions have graphs of Assouad dimension $2$. In contrast to these results, Chrontsios-Garitsis recently claims that the Assouad dimension of the general Weierstrass and Takagi function is strictly less than $2$ \cite{Arxive1}. However, there are no explicit lower and upper bounds for the Assouad spectrum of such functions yet computed.

These developments naturally lead to the study of the Assouad spectrum of explicitly constructed fractal functions. In this article, we focus on generalized affine fractal functions, whose graph geometry is governed by affine transformations and scaling parameters. Motivated by previous results on the Assouad dimension of certain fractal sets \cite{Roope, Tyson2025, Yu2020} and using the techniques of controlling the scaling parameter \cite{Ruan1}),  we study the Assouad spectrum and Assouad dimension of the graphs of generalized affine fractal functions. In this direction, our first main result is (see Theorem \ref{Upperbound}):
\begin{theorem}
   	Let  $N\geq 2$ be an integer, and $\Gamma_G$ be the graph of a generalized fractal interpolation function defined as
   	\begin{equation*}
   		f(L_i(x))
   		=
   		\alpha(L_i(x))(f(x)-b(x))+h(L_i(x)), \quad (x\in [x_0,x_N],\, 1\leq i\leq N),
   	\end{equation*}
   	where $x_i-x_{i-1}=\frac{x_N-x_0}{N}$ and  $\alpha: [x_0,x_N]\to (0,1)$  is Lipschitz. Let $D^*$ be the upper box dimension of $\Gamma_G$. Then
   		\begin{enumerate}
   		\item If $N||\alpha||_{\infty}\leq1$, then $ \dim_A^{\theta}(\Gamma_G)=1 \; \text{for}\; \theta\in(0,1).$  Consequently, $ \dim_{qA}(\Gamma_G)=1.$
   		\item  If $N||\alpha||_{\infty}>1$, then   \[\dim_{A}^\theta(\Gamma_G)\leq\min\bigg\{2,D^*+\frac{\theta}{1-\theta}\bigg(1+\frac{\log||\alpha||_{\infty}}{\log N}\bigg)\bigg\}.\]
   			\end{enumerate}
\end{theorem}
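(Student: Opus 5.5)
The plan is to reduce everything to a single estimate on the oscillation of $f$ over subintervals at the natural scales $N^{-k}$, and then count covering squares. Normalize $x_N-x_0=1$ and write $I_{i_1\dots i_k}=L_{i_1}\circ\cdots\circ L_{i_k}([x_0,x_N])$, an interval of length $N^{-k}$. Iterating the functional equation $k$ times gives, for $x\in[x_0,x_N]$,
\[
f(L_{i_1}\circ\cdots\circ L_{i_k}(x))=\prod_{j=1}^{k}\alpha(\cdot)\,f(x)+(\text{a function built from }h,\,b,\,\alpha),
\]
where the $j$-th factor of $\alpha$ is evaluated at the appropriate point of a level-$j$ interval. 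Since $\alpha$ is Lipschitz and positive, and $h,b$ are Lipschitz (the standing assumptions of the construction, which I will use), I would prove the following oscillation bound. With $\|\alpha\|_\infty=a$,
\[
\operatorname{osc}_{I_{i_1\dots i_k}} f\le C\sum_{j=0}^{k} a^{j}N^{-(k-j)}.
\]
Equivalently, $\operatorname{osc}\le C\,a^k$ when $Na>1$, and $\operatorname{osc}\le C\,k N^{-k}$ (or $C N^{-k}$ when $Na<1$) otherwise. The proof is by induction on $k$. On $I_{i_1}$ we have $\operatorname{osc}(\alpha\cdot g)\le a\operatorname{osc}(g)+\mathrm{Lip}(\alpha)\,|I|\,\|g\|_\infty$, and the Lipschitz terms $h,b$ contribute $O(|I|)$. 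This is the step where the Lipschitz continuity of $\alpha$ is essential, since it prevents the variable scaling from creating extra oscillation.

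For the counting, fix $R=r^\theta$ with $\theta\in(0,1)$ and $r\to0$, and let $x$ lie in a ball of radius $R$. The graph over an interval of length about $R$ is contained in about $r^{-1}\cdot R$ columns of width $r$. Take $r\approx N^{-k}$. Column $I$ needs at most $1+\operatorname{osc}_I f/r$ squares of side $r$, so
\[
N_r(B(z,R)\cap\Gamma_G)\lesssim \frac{R}{r}\Bigl(1+\frac{\operatorname{osc}}{r}\Bigr).
\]
In case (1), $Na\le1$ gives $\operatorname{osc}_I f\lesssim k N^{-k}=r\log(1/r)$. Hence $N_r\lesssim (R/r)\log(1/r)$. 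Since $\log(1/r)=(1-\theta)^{-1}\log(R/r)\cdot O(1)$ is subpolynomial in $R/r$, this yields $\dim_A^\theta\le1$. The lower bound $\ge1$ is trivial because the graph projects onto an interval, and $\dim_{qA}=\lim_{\theta\to1}\dim_A^\theta$ gives the consequence.

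For case (2), I would not use the crude column count. Instead I compare with the global count at scale $r$ inside a window of size $R$ located at a level-$m$ cylinder with $N^{-m}\approx R$. The iterated equation shows that, on $I_{i_1\dots i_m}$, the graph is an image of the whole graph under a map that contracts horizontally by $N^{-m}$ and vertically by at most $a^m$, plus a Lipschitz perturbation. So the number of $r$-squares needed is at most $N_{r/a^m}$-type counts of $\Gamma_G$ in rectangles. These are bounded by $(R/r)$ times a vertical factor, which in turn is controlled by the global box-count $N_\rho(\Gamma_G)\lesssim\rho^{-D^*-\epsilon}$ at the rescaled scale $\rho=r/R$, times a distortion factor $(a^m/R)$ arising from the mismatch between vertical contraction $a^m=R^{-\log a/\log N}$ and horizontal contraction $R$. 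Putting exponents together,
\[
N_r\lesssim\Bigl(\frac{R}{r}\Bigr)^{D^*+\epsilon}\,R^{-(1+\log_N a)}.
\]
Using $R^{-1}=(R/r)^{\theta/(1-\theta)}$, this is $(R/r)^{D^*+\epsilon+\frac{\theta}{1-\theta}(1+\log_N a)}$. The bound by $2$ is trivial. The main obstacle is making this rescaling rigorous when $\alpha$ is nonconstant. The composed vertical factor $\prod\alpha$ varies across the cylinder, so I would show, again from the Lipschitz property, that it is comparable up to a bounded constant (since $\sum_j N^{-j}<\infty$) to $\prod_j\alpha(c_j)$ at the cylinder's centers, and that it is at most $a^m$. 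This bounded distortion lets the box-counting of the rescaled copy be transferred from the global $D^*$ estimate.
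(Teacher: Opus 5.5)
Your proposal is correct and is essentially the paper's argument: iterating the one-step oscillation inequality of Lemma \ref{Osc} down a level-$k$ cylinder with $N^{-k}\approx R$ gives $\Omega^{q-k}_f(I_w)\le a^k\,\Omega^{q-k}_f(I)+\beta N^{-k}\sum_{j=0}^{k-1}(aN)^j$. Comparing $\Omega^{q-k}_f(I)$ with the global box count at scale $r/R$ then yields the same exponent $D^*+\frac{\theta}{1-\theta}(1+\log_N a)$, and your direct column count for $Na\le 1$ is a slightly more self-contained version of the paper's Cases 1 and 2; two small remarks: the bounded-distortion comparability of $\prod\alpha$ is not needed for an upper bound (the paper only uses $\alpha\le a$ at each step), and the Lipschitz perturbation has slope of order $(aN)^m$ in the cylinder's own coordinates, so it contributes a term of size $(R/r)(aN)^m$, which is harmless because $D^*\ge 1$ makes it dominated by the main term.
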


Our next main result provides a lower bound for the Assouad spectrum of the graph of generalized affine fractal functions (see Theorem \ref{Lowerbound}):
\begin{theorem}
   	Let $D_*$ be the lower box dimension of $\Gamma_G$. If $N^{-1}<||\alpha||_{\infty}<N^{1-D_*}$, the lower bound of the Assouad spectrum  of $\Gamma_G$ is given by
   		$$\dim_{A}^\theta(\Gamma_G)\geq D_* +\frac{\theta}{1-\theta}\cdot\frac{1-\gamma}{\gamma}\left(\gamma+1-D_*\right) \; \, \text{for}\;\, \theta\in\left(0,\gamma\right),$$
   	where $\gamma=\log_N\left(\frac{1}{||\alpha||_\infty}\right).$ \end{theorem}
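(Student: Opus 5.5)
The plan is to exhibit, for each small scale $R$, a ball $B(z,R)$ centered on $\Gamma_G$ and count $N_r(B(z,R)\cap\Gamma_G)$ with $r=R^{1/\theta}$. The count should be bounded below by roughly $(R/r)^{D_*}$ multiplied by an extra factor coming from the vertical stretching of the graph.

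\textbf{Choice of the ball.} Pick a point $x^*$ where $\alpha$ nearly attains $\|\alpha\|_\infty$; since $\alpha$ is continuous, $\alpha\ge \|\alpha\|_\infty-\epsilon$ on a neighbourhood of $x^*$. Take the level-$n$ cylinder interval $I_n=L_{i_1}\circ\cdots\circ L_{i_n}([x_0,x_N])$ of length $N^{-n}(x_N-x_0)$ around $x^*$. Iterating the functional equation on $I_n$, the graph over $I_n$ is the image of $\Gamma_G$ under a map with horizontal contraction $N^{-n}$ and vertical contraction $\prod_k\alpha\approx\|\alpha\|_\infty^n=N^{-n\gamma}$, plus a perturbation coming from $b$ and $h$.

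The terms $b$ and $h$ are Lipschitz/smooth data, so their contribution is of lower order. Because $\gamma<1$, this graph piece is a tall thin strip: width $N^{-n}$ and height $\asymp N^{-n\gamma}$ (using that $f-b$ is nonconstant).

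Choose $n$ so that $R\approx N^{-n\gamma}$. The ball $B(z,R)$ then contains a whole copy of the level-$n$ graph piece. This copy has the vertical extent $R$ but the horizontal extent only $R^{1/\gamma}$.

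\textbf{Counting.} Cover this piece at scale $r$. By self-affinity, $N_r$ of the piece is comparable to the number of $r$-boxes needed for $\Gamma_G$ after the rescaling $(x,y)\mapsto(N^n x, N^{n\gamma}y)$. The standard oscillation-sum argument for box dimension gives the count: splitting $I_n$ into subintervals of length $r$, the count is $\sum(\text{osc}/r)$. At level $n+m$ with $N^{-(n+m)}=r$, this sum is about $N^m\cdot N^{-(n+m)\gamma}/r$, equivalently $N^{m(1-\gamma)}\cdot R/r$ in the regime $r<R^{1/\gamma}$.

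This is where the condition $\theta<\gamma$ enters: it guarantees $r=R^{1/\theta}<R^{1/\gamma}$, so $r$ is below the horizontal width. To obtain $D_*$ rather than $2-\gamma$, combine this with the lower box-dimension bound. Within the piece, $N_r\gtrsim N^{m D_*}$ times the vertical-to-horizontal aspect ratio factor $N^{n(1-\gamma)}$, the number of horizontal-width squares stacked vertically. Substituting $N^{-n}=R^{1/\gamma}$ and $N^{-m}=r/R^{1/\gamma}$ and taking logarithms gives $\log N_r/\log(R/r)\ge D_*+\frac{\theta}{1-\theta}\cdot\frac{1-\gamma}{\gamma}(\gamma+1-D_*)$ after routine algebra. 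The hypothesis $\|\alpha\|_\infty<N^{1-D_*}$, i.e. $\gamma>D_*-1$, makes the factor $\gamma+1-D_*$ positive, so the bound is nontrivial; $\|\alpha\|_\infty>N^{-1}$ gives $\gamma<1$.

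\textbf{Main obstacle.} The hardest step is making the count rigorous with non-constant $\alpha$. The product $\prod\alpha(L_{i_1\cdots i_k}x)$ must stay comparable to $\|\alpha\|_\infty^n$ up to $N^{o(n)}$, which I would handle using the Lipschitz property and the choice of $x^*$, letting $\epsilon\to0$. The $b,h$ perturbation must also not destroy the oscillation lower bound. I would first establish a uniform lower bound $\operatorname{osc}_{J}(f)\gtrsim \|\alpha\|_\infty^{k}$ for level-$k$ subintervals $J$ near $x^*$ whenever the vertical term dominates, which holds since $\gamma<1$. I would then sum the oscillations inside the ball.
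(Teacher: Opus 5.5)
\textbf{There is a genuine gap: the final ``routine algebra'' is false, and a single strip cannot give the stated bound.} Put $w=N^{-n}=R^{1/\gamma}$ (the width of your single strip) and $r=N^{-(n+m)}=R^{1/\theta}$. Your count is then
\[
N^{n(1-\gamma)}N^{mD_*}=\frac{R}{w}\Big(\frac{w}{r}\Big)^{D_*}=\Big(\frac{R}{r}\Big)^{D_*}\Big(\frac{w}{R}\Big)^{D_*-1}=\Big(\frac{R}{r}\Big)^{D_*-\frac{\theta}{1-\theta}\cdot\frac{(1-\gamma)(D_*-1)}{\gamma}} .
\]
So the exponent you actually obtain is at most $D_*$, and it equals $1$ in the limit $\theta\to\gamma$. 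That is weaker than the trivial bound $\dim_A^\theta\Gamma_G\ge\overline{\dim}_B\Gamma_G\ge D_*$. It misses the target $D_*+\frac{\theta}{1-\theta}\cdot\frac{1-\gamma}{\gamma}(\gamma+1-D_*)$ by exactly $\frac{\theta}{1-\theta}(1-\gamma)$, i.e.\ by a multiplicative factor $(R/r)^{\frac{\theta}{1-\theta}(1-\gamma)}=R^{-(1-\gamma)}$.

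This is structural, not a bookkeeping slip. One strip is an affine copy of $\Gamma_G$ squeezed into a $w\times R$ rectangle with $w<R$. Counting boxes inside it can only yield $(R/w)(w/r)^{D_*}\le (R/r)^{D_*}$. To beat $D_*$ you must find \emph{many} such strips inside the same ball.

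\textbf{This multiplicity is exactly what the paper's proof uses and yours lacks.} Let $a=\|\alpha\|_\infty$ and $a^{k}\asymp R\asymp N^{-k'}$. The paper asserts that at least $Ka^{k'-k}\asymp R^{\gamma-1}$ level-$k$ pieces $F_w(\Gamma_G)$ lie in $B(x_{\max},R)$. It bounds each piece from below by iterating Lemma~\ref{Osc} with $a'=a-\lambda_\alpha N^{-k'}$ and invoking the lower box dimension; this per-piece bound $a^kN^{k(1-D_*)}r^{-D_*}$ coincides with your single-strip count. Multiplying gives $a^{k'}N^{k(1-D_*)}R^{-D_*/\theta}\asymp (R/r)^{A_L}$, and the factor $a^{k'-k}$ is precisely your missing $R^{-(1-\gamma)}$.

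So the real obstacle is not the non-constant $\alpha$ or the $b,h$ perturbation you single out. It is a packing statement: you must show that $\gtrsim R^{\gamma-1}$ distinct level-$n$ pieces, each of height $O(R)$, sit within vertical distance $O(R)$ of the centre inside a horizontal window of length $R$. That is a statement about the values of $f$ at many cylinder endpoints near $x^*$. The paper states this count without argument, so if you adopt this route you will have to prove it yourself.
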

   The above results yield explicit numerical bounds that allow us to compute the lower and upper bounds of the Assouad spectrum for a variety of functions, which include the Weierstrass and the Takagi functions, thus addressing the open problem  Question 1. 
The next objective of this paper is to compute a non-trivial lower bound on the Assouad dimension of the graph of general Weierstrass and Takagi functions. 
\begin{theorem}
	Let $N\geq 2$ and $\frac{1}{N}<\lambda<1$ be such that $\lambda^2 N<1$.  Let $$W_{\lambda,N}(x)=\sum_{k=0}^\infty\lambda^k\cos(2\pi N^kx), \;\; x\in \mathbb{R} $$
		denote the Weierstrass function. Then, the Assouad dimension of the graph of $W$ satisfies
		\[
		\dim_A(\Gamma_W)\geq 1 + \log_N\left(\frac{1}{\lambda}\right).
		\]
		\end{theorem}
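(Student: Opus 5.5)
The plan is to get the bound from the lower bound for the Assouad spectrum in Theorem \ref{Lowerbound}, and then let $\theta$ increase to the right endpoint of its admissible range. This works because $\dim_A(F)\geq \dim_A^\theta(F)$ for every $\theta\in(0,1)$. Throughout, write $\gamma=\log_N(1/\lambda)$. The hypothesis $\frac1N<\lambda<1$ gives $\gamma\in(0,1)$, and $\lambda^2N<1$ is equivalent to $\gamma>\tfrac12$.

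\textbf{Step 1: realise $W$ as a generalized affine FIF.} On $[x_0,x_N]=[0,1]$ take the uniform partition $x_i=i/N$, with maps $L_i(x)=(x+i-1)/N$. Since $N^k L_i(x)\equiv N^{k-1}x \pmod 1$ for $k\geq 1$, we get
\[
W(L_i(x))=\lambda W(x)+\cos(2\pi L_i(x)).
\]
This is the functional equation of the generalized affine construction with constant scaling function $\alpha\equiv\lambda$ (trivially Lipschitz, with $\|\alpha\|_\infty=\lambda$), $b\equiv 0$ and $h(x)=\cos(2\pi x)$. One must check that the endpoint/continuity conditions of the construction are met; they hold because $W$ is $1$-periodic and continuous, so $W$ is the unique fixed point of the associated Read--Bajraktarevi\'c operator. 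Hence $\Gamma_W|_{[0,1]}=\Gamma_G$ in the notation of the theorems above.

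\textbf{Step 2: lower box dimension.} I will invoke the classical result (Kaplan--Mallet-Paret--Yorke, Hu--Lau \cite{HuLau}) that the box dimension of $\Gamma_W$ exists and equals $2+\log_N\lambda=2-\gamma$. This gives $D_*=2-\gamma$. If the paper's own box-dimension formula for generalized affine FIFs with constant $\alpha$ and non-affine $h$ applies, it can be cited instead.

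\textbf{Step 3: verify the hypothesis of Theorem \ref{Lowerbound}.} We need $N^{-1}<\lambda<N^{1-D_*}=N^{\gamma-1}$. The left inequality is given. The right inequality reads $-\gamma<\gamma-1$, i.e. $\gamma>\tfrac12$, which is exactly $\lambda^2N<1$. Here the extra assumption is used, and the quantity $\gamma+1-D_*=2\gamma-1$ is positive.

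\textbf{Step 4: compute and take the limit.} Theorem \ref{Lowerbound} gives, for $\theta\in(0,\gamma)$,
\[
\dim_A(\Gamma_W)\geq\dim_A^\theta(\Gamma_W)\geq 2-\gamma+\frac{\theta}{1-\theta}\cdot\frac{1-\gamma}{\gamma}\,(2\gamma-1).
\]
The right-hand side is increasing in $\theta$. Letting $\theta\uparrow\gamma$, the factor $\frac{\theta}{1-\theta}\cdot\frac{1-\gamma}{\gamma}$ tends to $1$, so the bound tends to $2-\gamma+2\gamma-1=1+\gamma=1+\log_N(1/\lambda)$.

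The restriction $W|_{[0,1]}$ versus $W$ on $\mathbb R$ is harmless. Assouad dimension is monotone, and by periodicity the graph over $\mathbb R$ is a union of translates of the same piece, so it has the same Assouad dimension.

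\textbf{Main obstacle.} The delicate point is Step 1: making sure $W$ genuinely fits the paper's generalized affine framework, including any interpolation or endpoint matching conditions and regularity requirements on $h$ and $b$, so that Theorem \ref{Lowerbound} applies verbatim. A secondary point is that the box-dimension input must be the lower box dimension, which the classical Hu--Lau result supplies.
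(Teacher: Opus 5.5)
Your proposal is correct and follows the paper's proof of part (3) of Theorem \ref{Weier} exactly: apply Theorem \ref{Lowerbound} with $D_*=2-\gamma$ and let $\theta\uparrow\gamma$, so that $2-\gamma+\frac{\theta}{1-\theta}\cdot\frac{1-\gamma}{\gamma}(2\gamma-1)\to 1+\gamma$. You are more explicit than the paper in checking that $\lambda<N^{1-D_*}$ is equivalent to $\lambda^2N<1$ and that $W$ satisfies the functional equation with $\alpha\equiv\lambda$; as in the paper, all of the real content is carried by Theorem \ref{Lowerbound}, which you use as a black box.
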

\begin{theorem}
	Let $N\geq 2$ and $\frac{1}{N}<\lambda<1$ be such that $\lambda^2 N<1$. Let $$T_{\lambda,N}(x)=\sum_{k=0}^\infty\lambda^k\text{Dist}(N^kx,\mathbb{Z}),\;\;  x\in \mathbb{R} $$
	denote the Takagi function. Then, the Assouad dimension  of the graph of $T$ satisfies
	\[
	\dim_A(\Gamma_T)\geq 1 + \log_N\left(\frac{1}{\lambda}\right).
	\]
	\end{theorem}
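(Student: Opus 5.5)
We want the lower bound $\dim_A(\Gamma_T)\ge 1+\log_N(1/\lambda)=1+\gamma$, where $\gamma=\log_N(1/\lambda)\in(0,1)$; the hypothesis $\lambda^2N<1$ is equivalent to $\gamma>1/2$. The plan is to use weak tangents. It is standard that the Assouad dimension of a compact set is at least the Hausdorff (hence box) dimension of any weak tangent of it. We will therefore blow up $\Gamma_T$ near a suitable point at scales $N^{-n}$ and identify a limiting set whose dimension is at least $1+\gamma$.

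\emph{Step 1: self-affinity.} Write $T=T_{\lambda,N}$ and $x_k=k/N^n$. From the series,
\[
T(x_k+N^{-n}y)=\sum_{j<n}\lambda^j\,\mathrm{Dist}\bigl(N^jx_k+N^{j-n}y,\mathbb Z\bigr)+\lambda^n T(y).
\]
For fixed $n$ and $k$, the finite sum is affine in $y\in[0,1]$, because each term $\mathrm{Dist}(\cdot,\mathbb Z)$ is linear on intervals of length $1/2$ that do not straddle a breakpoint. Hence, up to a shear $(u,v)\mapsto(u,v-s_{n,k}u)$, the piece of $\Gamma_T$ over $[x_k,x_{k+1}]$ is the image of $\Gamma_T$ under $\mathrm{diag}(N^{-n},\lambda^n)$. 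The slope satisfies $|s_{n,k}|\lesssim N^n\sum_{j<n}\lambda^j N^{j-n}$, which is of order $(\lambda N)^n$.

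\emph{Step 2: choosing the window.} Since $\lambda>1/N$, the column of width $N^{-n}$ has height $\lambda^n\gg N^{-n}$. Take a square $Q_n$ of side $r_n=\lambda^n$ centred on the graph over $x_k$. Rescaled by $\lambda^{-n}$, the piece of $\Gamma_T\cap Q_n$ from column $k$ becomes a sheared copy of $\Gamma_T$ compressed horizontally to width $(\lambda N)^{-n}\to0$: essentially a vertical segment. The square $Q_n$ also contains about $m_n=\lambda^nN^n$ neighbouring columns. Each rescaled column is a nearly vertical segment of height comparable to $1$, offset vertically by the affine part. In the limit, $\lambda^{-n}(\Gamma_T\cap Q_n-p)$ should therefore converge in the Hausdorff metric to a set of the form $\bigcup_{t\in[0,1]}\{t\}\times I_t$, with the $I_t$ segments of length bounded below. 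Such a set would have dimension $2$, which would give more than we need.

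\emph{Step 3: the role of the shear.} The obstruction is that within $Q_n$ the affine parts $\lambda^{-n}\cdot(\text{finite sum})$ drift at slope of order $(\lambda N)^n\lambda^{-n}N^{-n}$ per column. This drift can push most columns out of the window and spoils the picture above. To keep enough of them, I would pick $x_k$ near a local extremum of the partial sum, namely a point of the form $x_k=\sum_{j<n}\epsilon_jN^{-j-1}$ with digits chosen so that the slopes $\pm\lambda^jN^j$ of the $j$-th terms largely cancel. Alternatively, I would use a window of size $r_n=\lambda^{\ell}$ at an intermediate level $\ell$ and count only columns of level $n$ that remain inside.

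\emph{Step 4: counting.} The cleaner route is to avoid passing to a tangent and instead estimate $N_r(B(p,R)\cap\Gamma_T)$ directly with $R=N^{-\ell}$ and $r=N^{-n}$, $n\gg\ell$, and then show $N_r\gtrsim (R/r)^{1+\gamma-\varepsilon}$. The ball contains the graph over an $x$-interval of length $\gtrsim R\cdot R^{-1}\lambda^{\ell}$-fraction chosen near the extremum. Each of its level-$n$ columns has oscillation $\approx\lambda^n$ and so needs about $\lambda^nN^n$ boxes of side $r$. This gives a count of order $(R/r)\cdot(\lambda N)^{n}$ up to $\ell$-corrections. Optimising with $n\to\infty$ relative to $\ell$ should yield exponent $1+\gamma$ and not better.

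The condition $\gamma>1/2$ is what I expect to be needed to guarantee that the cancelling digit choice keeps the accumulated drift $\sum_j \pm\lambda^jN^j\cdot N^{-n}$ below $\lambda^n$ over a proportion of the $m_n$ columns. Verifying this cancellation, and the lower oscillation bound $\operatorname{osc}_{[x_k,x_{k+1}]}T\gtrsim\lambda^n$ (which follows from Step 1 since $T$ is nonconstant), is the main obstacle.
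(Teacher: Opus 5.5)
Your proposal does not yet contain a proof. The counting estimate that carries the bound is never established, and the heuristics around it point in incompatible directions.

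First note that for $\gamma\le 1/2$ the claim is trivial, since $\dim_A\Gamma_T\ge\overline{\dim}_B\Gamma_T=2-\gamma\ge 1+\gamma$. So $\lambda^2N<1$ is not an aid to cancellation; it is exactly the regime where the claim exceeds the box dimension.

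Your Step 4 count $(R/r)(\lambda N)^n$ presupposes that $B(p,R)$ contains the graph over an interval of length $\approx R$. With $\ell\ll n$ it only returns the box dimension $2-\gamma$. With $\ell\approx\gamma n$ (the largest $\ell$ for which level-$n$ pieces fit in the ball) it returns $2$. Your Step 3 target, keeping a fixed proportion of the $m_n=(\lambda N)^n$ columns in the window, also returns $2$: each level-$n$ piece has oscillation $\gtrsim\lambda^n$, so you would get $\gtrsim(\lambda N)^{2n}=(R/r)^2$ boxes. For $N=2$ this contradicts the Anttila--B\'ar\'any--K\"aenm\"aki result quoted in the introduction, so that target is unattainable in general.

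What is actually needed is an intermediate count: base points $p$ such that $B(p,\lambda^n)\cap\Gamma_T$ contains $\gtrsim m_n^{\gamma}=(\lambda N)^{\gamma n}$ full level-$n$ pieces. That gives $\gtrsim m_n^{1+\gamma}$ boxes against $R/r=m_n$. Rescale the level-$\ell$ piece, with $N^{-\ell}=\lambda^n$. The requirement then says that a strip of relative thickness $\lambda^{n-\ell}$, along the direction of the level-$\ell$ shear, meets $\gtrsim N^{\gamma(n-\ell)}$ of the $N^{n-\ell}$ sub-columns. On average a strip meets only about $N^{(1-\gamma)(n-\ell)}$ of them, and $1-\gamma<\gamma$. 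Digit cancellation does not touch this. Even with zero tilt at level $\ell$, the window is a slab of height $\lambda^n\ll\lambda^\ell$ through that piece. How many sub-columns it meets is a question about the size of level sets of $T$ (or of $T$ plus a linear function), and the proposal says nothing about that.

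The paper argues differently. It applies its spectrum lower bound, Theorem \ref{Lowerbound}, with $\alpha\equiv\lambda$ and $D_*=2-\gamma$; the hypotheses $N^{-1}<\lambda<N^{1-D_*}$ are exactly $\lambda N>1$ and $\lambda^2N<1$. This gives $\dim_A^\theta\Gamma_T\ge 2-\gamma+\frac{\theta}{1-\theta}\cdot\frac{1-\gamma}{\gamma}(2\gamma-1)$ for $\theta<\gamma$. It then lets $\theta\to\gamma$ and uses $\dim_A\ge\dim_A^\theta$.

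The geometric input of Theorem \ref{Lowerbound}, however, is the assertion that $B(x_{\max},R)$ with $R\approx a^k$ contains at least $Ka^{k'-k}\asymp(\lambda N)^{\gamma k}$ level-$k$ cylinders. This is exactly the count you flag as the main obstacle, and it is stated there without argument. It cannot hold at an arbitrary point, and for constant $\alpha$ every point qualifies as $x_{\max}$. Indeed $T(x)\ge x^{\gamma}/(2N)$ on $(0,1/2]$, so the ball $B((0,0),\lambda^k)$ meets only $O(1)$ level-$k$ cylinders. Quoting the spectrum bound would therefore not close your gap. In either formulation the missing ingredient is a genuine selection of good base points, or equivalently a construction of unusually large slices.

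A minor correction to Step 1: for odd $N$ the finite sum is not affine on $N$-adic intervals. The function $\mathrm{Dist}(\cdot,\mathbb Z)$ has corners at half-integers, for example at $1/2\in[1/3,2/3]$ when $N=3$. The sum is only Lipschitz with constant $O((\lambda N)^n)$.
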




The remainder of the paper is organized as follows. In Section \ref{sec-Pre}, we review some basic definitions on the generalized affine fractal functions, upper box dimension, Assouad dimension, and Assouad spectrum. In Section \ref{sec-lemma}, we develop several auxiliary results required in the proofs of the main estimates. Section \ref{sec-main} contains the principal results, giving upper and lower bounds for the Assouad spectrum of graphs of generalized affine fractal functions. In Section \ref{sec-ex}, we apply these results to the Weierstrass and Takagi functions and obtain explicit estimates for their Assouad spectra and Assouad type dimensions. We conclude this section by providing the Assouad spectrum of a general self-affine function.

\section{Backgrounds and preliminaries}  \label{sec-Pre}
In this section, we recall some well-known definitions and results that will be used in the next sections. For more details on fractal functions and related works, we refer to \cite{Barnsley1986, Chand, Peter, Maria}. The applications of Assouad spectrum and Assouad dimension can be found in \cite{Fraser2020, FraserAll, Fraser1, Fraser, Lu}.
	Throughout this paper, $N\geq 2$ will denote a positive integer and we shall use  $\Gamma_f=\{(x,f(x)):x\in[x_0,x_N]\}$ as graph of $f:[x_0,x_N]\to \mathbb{R}$ and $ \mathbb{N}_{k}^{0}=\mathbb{N}_{k} \cup \{0\},$ where $\mathbb{N}_k=\{1,2,\dots k\}$.
\subsection{Box dimension}

For any $m_1,m_2\in\mathbb {Z}$ and $\varepsilon>0$, we call
\[
[m_1\varepsilon,(m_1+1)\varepsilon]
\times
[m_2\varepsilon,(m_2+1)\varepsilon]
\]
an \emph{$\varepsilon$-coordinate square} in $\mathbb {R}^2$.

\begin{definition} Let $E\subset \mathbb {R}^2$ be bounded and let $N_E(\varepsilon)$ denote the number of $\varepsilon$-coordinate squares intersecting $E$. The upper box dimension and lower box dimension of $E$ are defined by
\[
\overline{\dim}_B E
=
\limsup_{\varepsilon\to 0^+}
\frac{\log N_E(\varepsilon)}
{\log (1/\varepsilon)}
\]
and
\[
\underline{\dim}_B E
=
\liminf_{\varepsilon\to 0^+}
\frac{\log N_E(\varepsilon)}
{\log (1/\varepsilon)},
\]
respectively.
\end{definition}
If $
\overline{\dim}_B E
=
\underline{\dim}_B E,
$
then we denote the common value by $
\dim_B E$
and call it the box dimension of $E$. It is well known that if $E$ is the graph of a continuous function on a closed interval of $\mathbb {R}$, then $
\dim_B E\geq 1$ (see \cite{Fal}).

\subsection{Assouad dimension and Assouad spectrum}
The Assouad dimension of a metric space measures its thickness and was originally used in embedding theory \cite{Olson}. 
For a bounded set $E\subseteq \mathbb{R}^n$ and $r>0$, let $N_r(E)$ be the minimum number of open balls of radius $r$ required to cover $E$. The \textit{Assouad dimension} of $F \subseteq \mathbb{R}^n$ is given by
\[
\dim_A(F)
=
\inf\left\{
\alpha :
\begin{array}{l}
\text{there exists } C>0 \text{ such that for all } 0<r<R<1 \, \text{and} \, x\in F,\\
N_r\big(B(x,R) \cap F\big) \le C \left( \frac{R}{r} \right)^{\alpha} 
\end{array}
\right\}.
\]

To obtain a more detailed description of scaling between these extremes, Fraser and Yu \cite{Fraser} introduced the Assouad spectrum as: 
\[
\dim_A^\theta(F)
=
\inf\left\{
\alpha :
\begin{array}{l}
\text{there exists } C>0 \text{ such that for all } 0<R<1 \, \text{and} \, x\in F,\\
N_{R^{1/\theta}}\big(B(x,R) \cap F\big) \le C \left( \frac{R}{R^{1/\theta}} \right)^{\alpha}
\end{array}
\right\}.
\]

This spectrum depends on a parameter $\theta \in (0,1)$ and interpolates between the upper box dimension and the quasi-Assouad dimensions. In particular, as $\theta \to 0$, the Assouad spectrum converges to the box-counting dimension. The quasi-Assouad dimension was defined by L\"u and  Xi \cite{Lu}, and  defined as 
  $\dim_{qA}(F)= \lim\limits_{\theta\to 1}\dim_A^\theta(F)$.
  A detailed analysis of the Assouad spectrum and the quasi-Assouad dimension can be found in \cite{Fraser2020, FraserAll}.

\subsection{Generalized affine fractal interpolation functions}
For each $i\in \mathbb{N}_N,$ let  $L_i:[x_0,x_N]\to [x_{i-1},x_i]$
be contractive homeomorphisms satisfying 
	\[L_i(x_0)=x_{i-1}, \; L_i(x_N)=x_i \] and  $
F_i:[x_0,x_N]\times \mathbb{R}\to \mathbb{R}$
be continuous maps satisfying
\begin{equation}\label{eq-2.2}
	F_i(x_0,y_0)=y_{i-1},\qquad 
	F_i(x_N,y_N)=y_i.
\end{equation}
Also, assume that each $F_i$ is uniformly contractive with respect to the second variable, that is, there exists a constant $\beta_i\in(0,1)$ such that 
\begin{equation}\label{eq-2.3}
	\left|F_i(x,y_1)-F_i(x,y_2)\right|
	\leq \beta_i |y_1-y_2|\,  \text{for all}\, x\in[x_0,x_N], \, \text{and all}\, y_1,y_2\in\mathbb{R}.
\end{equation} Further, for $i\in \mathbb{N}_N$, we define the maps $ W_i:[x_0,x_N]\times\mathbb{R}
\to [x_{i-1},x_i]\times\mathbb{R}$
by
\begin{equation}\label{eq-2.4}
	W_i(x,y)=\left(L_i(x),F_i(x,y)\right).
\end{equation}
Here $\{W_i:i\in \mathbb{N}_N\}$ forms an iterated function system (IFS for short) on
$[x_0,x_N]\times\mathbb{R}$. For this IFS, using Barnsley result \cite{Barnsley1986} there exists a unique continuous function $
f:[x_0,x_N]\to\mathbb{R}$ such that its graph $\Gamma_f$
is the invariant set of IFS \eqref{eq-2.4} and satisfies  $
f(x_i)=y_i,\; \text{for}\;  i\in \mathbb{N}_N^0.
$

The function $f$ satisfying these interpolation conditions is called a \textit{fractal interpolation function} (FIF for short) associated with IFS \eqref{eq-2.4}. If $W_i$ is affine, the corresponding FIF is called an affine FIF. Thus, for each $i$, there exist real numbers $a_i,b_i,c_i,d_i,e_i$ such that $
W_i(x,y)=(a_ix+b_i,c_ix+d_iy+e_i).$
Here $d_i$ are called the vertical scaling factors of $f$. For an affine FIF, the function $F_i$ can be rewritten as \[
F_i(x,y)
=d_i(y-b(x))+h(L_i(x)),
\]
where $b$ is a linear function satisfying
	$
	b(x_0)=y_0,\,b(x_N)=y_N,\, h$ is a piecewise linear function such that $h(x_i)=y_i,\,i\in \mathbb{N}^0_N,
	$ and $
	h|_{[x_{i-1},x_i]}
	$
	is linear for each $i\in \mathbb{N}_N.$

 Let $\alpha:[x_0,x_N]\to\mathbb{R}
$
be a continuous function satisfying
$
|\alpha(x)|<1,\,\; \text{for all}\;\;  x\in[x_0,x_N].$ For each $i\in \mathbb{N}_N$, define
\begin{equation*}
	F_i(x,y)
	=
	\alpha(L_i(x))(y-b(x))+h(L_i(x)), \quad i\in \mathbb{N}_N
\end{equation*}
with $b$ and $h$ defined similarly as done above. It is easy to verify that $F_i$ satisfies Equation \eqref{eq-2.2} and Equation  \eqref{eq-2.3}. Hence, if we define
$W_i$ by Equation $\eqref{eq-2.4}$, then the IFS
$
\{W_i\}_{i=1}^{N}
$
determines an FIF $f^*$ satisfying the functional equation
\begin{equation}\label{2.6}
f^*(L_i(x))
=
\alpha(L_i(x))\bigl(f^*(x)-b(x)\bigr)
+h(L_i(x)), \quad x\in [x_0,x_N], i\in \mathbb{N}_N.
\end{equation}
Then $f^*$ is called a \emph{generalized affine FIF}, and $\alpha$ is called
the vertical scaling function of $f^*$.

In this paper, we study the Assouad dimension of a particular class of generalized affine FIFs, where the points $\{x_i\}_{i=0}^{N}$ are uniformly spaced on $[x_0,x_N]$, that is,
\[
x_i-x_{i-1}=\frac{x_N-x_0}{N},\quad i\in \mathbb{N}_N,
\]
and $\alpha$ is positive and Lipschitz. 
That is $\alpha(x)>0$ for  $x\in [x_0,x_N]$ and there exists $\lambda_\alpha\geq 0$ such that 
\[
|\alpha(x_1)-\alpha(x_2)|
\leq \lambda_\alpha |x_1-x_2|
\; \text{for all}\; 
x_1,x_2\in[x_0,x_N].\]

\subsection{Vertical scaling matrices}
In this subsection, we assume that the vertical scaling function \(\alpha\) is Lipschitz with the Lipschitz constant being $\lambda_{\alpha}$. Given a closed interval \(E=[a,b]\), for each $k\in \mathbb{N}$ and
$j\in \mathbb{N}_{N^k}$, we write
\[
E_j^k=
\left[
a+\frac{j-1}{N^k}(b-a),
\,
a+\frac{j}{N^k}(b-a)
\right].
\]

It is clear that
\[
I_j^k=
\left[
\frac{j-1}{N^k},
\frac{j}{N^k}
\right].
\]

For each $i\in \mathbb{N}_N$, we write for simplicity
\[
I_{i,j}^k
=
(I_i)_j^k
=
\left[
\frac{i-1}{N}+\frac{j-1}{N^{k+1}},
\,
\frac{i-1}{N}+\frac{j}{N^{k+1}}
\right].
\]

For $k\in \mathbb{N}$ and $ i\in \mathbb{N}_N, j\in \mathbb{N}_{N^k},$ we define
\[
\overline{\alpha}_{i,j}^k
=
\max_{x\in I_{i,j}^k}
|\alpha(x)|.
\]
It is clear that
$
\overline{\alpha}_{i,j}^k
=
\max_{x\in I_j^k}
|\alpha(L_i(x))|.
$ To calculate the box dimension of the FIF, we introduce an
\(N^k\times N^k\) matrix \(P_k\) as follows. That is, for $ i\in \mathbb{N}_N, \ell \in \mathbb{N}_{N^{k-1}}$ and $ j\in \mathbb{N}_{N^k}$
\begin{equation}\label{3.2}
(\overline{P}_k)_{(i-1)N^{k-1}+\ell,\;j}
=
\begin{cases}
\overline{\alpha}_{i,j}^k,
&
(\ell-1)N < j \leq \ell N,
\\[1ex]
0,
&
\text{otherwise}.
\end{cases}
\end{equation}

Similarly, we define $
\underline{\alpha}_{i,j}^k
=
\min_{x\in I_{i,j}^k}
|\alpha(x)|
$
and define another \(N^k\times N^k\) matrix
\(\underline{P}_k\)
by replacing \(\overline{\alpha}_{i,j}^k\) with \(\underline{\alpha}_{i,j}^k\). Both \(\overline{P}_k\) and \(\underline{P}_k\) are called
\emph{vertical scaling matrices of level \(k\)}.

\medskip
Now we mention a few well-known definitions and results from matrix analysis (see\cite{Horn}). Let $M_n(\mathbb{R})$ denote the set of $n\times n$ real matrices. For $x,y\in \mathbb{R}^n,\; x\geq y $ means  $x_i\geq y_i,\, i\in \mathbb{N}_n $. A matrix $A=(a_{i,j})\in M_n(\mathbb{R})$ is called non-negative if $A\geq 0$ and positive if $A>0$. 
%

A matrix $ A\geq 0$  
is called \emph{irreducible} if there is no permutation matrix $\mathcal{P}$ for which 
\[\mathcal{P}^TA\mathcal{P}=\begin{pmatrix}
A_{11} & A_{12}\\
0 & A_{22}
\end{pmatrix}
 \] with both diagonal blocks nonempty square matrices.

\begin{theorem}[Perron--Frobenius Theorem]
Let \(A\geq 0\) be an \(n\times n\) irreducible  matrix. Then its spectral radius \(\rho(A)\) is an eigenvalue of \(A\), $ \rho(A)>0$ and there exists \(x>0\) such that $Ax=\rho(A)x.$ Also $\rho(A)$ is algebraically simple. 
\end{theorem}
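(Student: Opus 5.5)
We prove this classical result along the lines of Wielandt's argument; see \cite{Horn}. The first step is that irreducibility of $A\geq 0$ gives $(I+A)^{n-1}>0$. Indeed, in the directed graph on $\{1,\dots,n\}$ with an edge $i\to j$ whenever $a_{ij}>0$, irreducibility means that every pair of vertices is joined by a path of length at most $n-1$. Expanding $(I+A)^{n-1}$ binomially then shows that every entry is positive. The consequence we use repeatedly is that $(I+A)^{n-1}x>0$ for every $x\geq 0$ with $x\neq 0$.

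Next we define the Collatz--Wielandt function
\[
r(x)=\min_{i:\,x_i>0}\frac{(Ax)_i}{x_i},\qquad x\geq 0,\ x\neq 0,
\]
and set $r=\sup_x r(x)$. Since $r(x)$ is scale invariant, we may restrict to the simplex $\Delta=\{x\geq 0:\sum x_i=1\}$. The function $r$ need not be continuous on $\Delta$, so we pass to the compact set $B=(I+A)^{n-1}\Delta$, which consists of strictly positive vectors. On $B$ the function $r$ is continuous, so it attains its maximum at some $z\in B$. Moreover, because $(I+A)^{n-1}$ commutes with $A$ and is positive, $r(x)\le r\big((I+A)^{n-1}x\big)$, so the supremum over $B$ equals $r$. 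Also $r<\infty$, since $r(x)\le\max_i\sum_j a_{ij}$.

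The key step, and the one needing most care, is to show that $Az=rz$. By the definition of $r(z)$ we have $Az-rz\geq 0$. Suppose this vector is nonzero. Applying $(I+A)^{n-1}$ gives $Aw-rw>0$ with $w=(I+A)^{n-1}z>0$. Hence $r(w)>r$ strictly, after normalising $w$ to lie in $\Delta$, which contradicts maximality. Therefore $Az=rz$. Positivity of $z$ then follows from $(1+r)^{n-1}z=(I+A)^{n-1}z>0$.

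For $r>0$: an irreducible matrix has no zero row when $n\ge2$. (For $n=1$, the standard convention requires $A\neq0$.) Hence $Az>0$ and so $r>0$. To show $r=\rho(A)$, take any eigenpair $Ay=\mu y$. Taking moduli entrywise gives $|\mu||y|\le A|y|$, so $|\mu|\le r(|y|)\le r$. Thus $\rho(A)\le r$, and since $r$ is itself an eigenvalue, $r=\rho(A)$.

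For simplicity we argue in two stages. Geometric simplicity: suppose $w$ is a real eigenvector for $r$ that is not a multiple of $z$; real and imaginary parts of a complex eigenvector are again eigenvectors, so real $w$ suffices. Choose $t$ so that $z-tw\ge0$, $z-tw\neq0$, and some coordinate of $z-tw$ vanishes. Then $z-tw$ is a nonnegative eigenvector for $r$, and the argument above forces $(1+r)^{n-1}(z-tw)>0$, a contradiction. Algebraic simplicity: apply the same theory to $A^T$, which is also irreducible, to obtain a left eigenvector $y>0$ with $y^TA=ry^T$. If $r$ had a Jordan block of size at least $2$, there would exist $v$ with $(A-rI)v=z$. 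Multiplying on the left by $y^T$ gives $0=y^Tz>0$, which is impossible.
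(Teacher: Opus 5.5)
The paper gives no proof of this statement. It is quoted from \cite{Horn} as background for the vertical scaling matrices $\overline{P}_k$, so there is no argument in the paper to compare yours against.

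Judged on its own, your proof is the classical Wielandt (Collatz--Wielandt) argument, and it is correct and complete. Each step goes through:
\begin{itemize}
\item the passage to the compact set $B=(I+A)^{n-1}\Delta$ of strictly positive vectors;
\item the strict-improvement argument forcing $Az=rz$;
\item the entrywise estimate $|\mu|\,|y|\le A|y|$, which gives $r=\rho(A)$;
\item the two-stage simplicity argument: a vanishing coordinate of $z-tw$ for geometric simplicity, and the positive left eigenvector of $A^T$ for algebraic simplicity.
\end{itemize}
Two steps are left implicit, but both are routine. First, take $t=\min_{w_i>0}z_i/w_i$, after replacing $w$ by $-w$ if $w$ has no positive entry. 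Second, geometric simplicity is what identifies the bottom vector of a Jordan chain with a nonzero multiple of $z$, and this is what produces $v$ with $(A-rI)v=z$.

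The other common textbook route proves Perron's theorem for positive matrices first, then transfers it to irreducible $A$ through the positive matrix $(I+A)^{n-1}$ or a perturbation argument. Compared with that, your route is self-contained. It also yields the max--min formula $\rho(A)=\max_{x\ge0,\,x\ne0}\min_{x_i>0}(Ax)_i/x_i$ as a by-product.

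Your remark about $n=1$ deserves emphasis. Under the definition of irreducibility written in the paper (no permutation produces a block upper-triangular form with two nonempty diagonal blocks), every $1\times 1$ nonnegative matrix is irreducible, including $A=0$. For that matrix $\rho(A)=0$, so the claim $\rho(A)>0$ is false as printed. The statement needs the usual convention that the $1\times1$ zero matrix is reducible, or equivalently the assumption $n\ge 2$ or $A\neq 0$, exactly as you noted. This is immaterial for the $N^k\times N^k$ matrices with $N\ge 2$ that the paper has in mind.
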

%
%

\begin{theorem}\cite[Theorem 3.4]{Ruan2023}
Assume that the vertical scaling function \(\alpha\) is not identically
zero on every subinterval of $I=[0,1].$
Then $ \rho(\overline{P}_{k+1})
\leq
\rho(\overline{P}_k) \;\text{for all}\; k\in \mathbb{N}.$ Consequently, $
\rho^*:=\lim\limits_{k\to\infty}\rho(\overline{P}_k)
$ exists.
\end{theorem}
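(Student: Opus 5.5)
We would prove the monotonicity by recasting the entries of powers of $\overline{P}_k$ as sums over words, and then comparing levels $k$ and $k+1$ word by word. Throughout we work on $I=[0,1]$ as in the statement, and we identify the index set $\mathbb{N}_{N^k}$ with words $d_1d_2\cdots d_k\in\mathbb{N}_N^k$ via the $N$-adic subintervals $I_j^k$.

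\emph{Step 1: transitions as a shift.} Under this identification, the row index $(i-1)N^{k-1}+\ell$ corresponds to the word $i\,w$, where $w$ is the word of $\ell$. The definition \eqref{3.2} then says that $(\overline{P}_k)_{iw,\,j}\neq 0$ exactly when $j=w\,d$ for some digit $d$. In that case the entry is $\max\{|\alpha(x)| : x\in I^k_{i,j}\}$. The interval $I^k_{i,j}$ is the $N$-adic cylinder of the word $i\,w\,d$, which has length $k+1$. So $\overline{P}_k$ is a weighted shift on words of length $k$: from $d_1\cdots d_k$ we pass to $d_2\cdots d_{k+1}$, with weight $\overline{\alpha}[d_1\cdots d_{k+1}]$, the maximum of $|\alpha|$ over that cylinder.

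\emph{Step 2: total sums as sums over long words.} Write $S_k(n)$ for the sum of all entries of $\overline{P}_k^{\,n}$. Iterating Step 1,
\[
S_k(n)=\sum_{d_1\cdots d_{k+n}\in\mathbb{N}_N^{k+n}}\ \prod_{m=1}^{n}\overline{\alpha}[d_m\cdots d_{m+k}].
\]
This holds because a path of length $n$ in $\overline{P}_k$ is determined by a word of length $k+n$. Its $m$-th weight is the maximum of $|\alpha|$ over the cylinder of the window $d_m\cdots d_{m+k}$.

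\emph{Step 3: comparison of levels.} We apply the same formula at level $k+1$, which sums over words of length $k+1+n$ with windows $d_m\cdots d_{m+k+1}$. The cylinder of the longer window is contained in the cylinder of $d_m\cdots d_{m+k}$. Hence each factor at level $k+1$ is at most the corresponding factor at level $k$. Since all entries are nonnegative, we get
\[
S_{k+1}(n)\le \sum_{d_{k+n+1}=1}^{N}\ \sum_{d_1\cdots d_{k+n}}\prod_{m=1}^{n}\overline{\alpha}[d_m\cdots d_{m+k}] = N\,S_k(n).
\]
The extra final digit does not appear in any level-$k$ factor, so summing over it only contributes the factor $N$.

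\emph{Step 4: passing to spectral radii.} For a nonnegative matrix $A$, the sum of the entries of $A^n$ is equivalent to the $\ell^1$ operator norm of $A^n$, up to dimension-dependent constants. Gelfand's formula therefore gives $\rho(A)=\lim_{n\to\infty}S(n)^{1/n}$. Taking $n$-th roots in Step 3 and letting $n\to\infty$ yields $\rho(\overline{P}_{k+1})\le \rho(\overline{P}_k)$, since $N^{1/n}\to 1$. The sequence $\rho(\overline{P}_k)$ is therefore nonincreasing and bounded below by $0$, so $\rho^*$ exists.

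\emph{Remarks on the argument.} The hypothesis that $\alpha$ does not vanish identically on any subinterval is not needed for the inequality. It only guarantees positivity of the weights, so that Perron--Frobenius applies to irreducible $\overline{P}_k$ and $\rho^*$ is a meaningful growth rate; it can be mentioned at the end.

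The main obstacle is Step 1, namely checking that the index bookkeeping in \eqref{3.2} really produces this shift structure, with cylinders of length $k+1$ at level $k$. A naive comparison of row sums between levels loses a factor $N$ at every step and does not work. Working with total sums over words is what confines the loss to a single factor $N$, and I would check the word identification on small cases $k=1,2$ before writing the general argument.
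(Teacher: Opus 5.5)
Your proof is correct. Step~1 matches \eqref{3.2} and the explicit $\overline{P}_1,\overline{P}_2$ of Example~\ref{ex-affine}. Row $(i-1)N^{k-1}+\ell$ is the cylinder $L_i(I^{k-1}_\ell)$, i.e.\ the word $iw$. The admissible columns are the words $wd$, and the weight is $\max|\alpha|$ over the level-$(k+1)$ cylinder of $iwd$. So the shift description and the path-sum formula of Step~2 hold, and Steps~3--4 go through as written.

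The paper gives no proof of this statement; it only quotes it from \cite{Ruan2023}, so there is no internal argument to compare with. The non-vanishing hypothesis in the quoted statement is what a Perron--Frobenius proof would use. It makes every shift weight positive, so $\overline{P}_k$ is irreducible and has a positive left eigenvector $u$. Setting $\tilde u_{d_1\cdots d_{k+1}}=u_{d_1\cdots d_k}$, the same cylinder inclusion you use in Step~3 gives $\tilde u^{T}\overline{P}_{k+1}\le\rho(\overline{P}_k)\,\tilde u^{T}$, and subinvariance then yields the inequality.

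Your route replaces the eigenvector by total entry sums and accepts the loss of one factor $N$, which disappears under $n$-th roots. In exchange it needs no irreducibility, so it shows the hypothesis is unnecessary for the monotonicity. The same computation with minima gives $\rho(\underline{P}_{k+1})\ge\rho(\underline{P}_k)$. Since $\underline{P}_k\le\overline{P}_k$ entrywise, this sequence is bounded, so the limit $\rho_*$ used in the following proposition also exists.
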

\begin{proposition}\cite[Proposition 3.5]{Ruan2023}
Assume that the vertical scaling function \(\alpha\) is positive on \(I\).
Then $
\rho^*=\rho_*
$  and we denote the common value by $\rho_\alpha$.
\end{proposition}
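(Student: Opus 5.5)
The idea is to compare the two vertical scaling matrices of the same level entrywise. I read $\rho_*$ as $\lim_{k\to\infty}\rho(\underline{P}_k)$, defined in the source analogously to $\rho^*$. I will show that $\rho(\overline{P}_k)$ and $\rho(\underline{P}_k)$ differ by a factor tending to $1$. Since $\rho(\overline{P}_k)\to\rho^*$ by the preceding theorem, this forces $\rho(\underline{P}_k)$ to converge to the same limit. In particular I do not need a separate monotonicity argument for $\underline{P}_k$: existence of $\rho_*$ comes out of the comparison.

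\textbf{Step 1 (uniform ratio bound).} Since $\alpha$ is continuous and positive on the compact interval $I$, we have $m:=\min_{I}\alpha>0$. Let $\omega$ be a modulus of uniform continuity of $\alpha$; in the Lipschitz setting of this paper one may take $\omega(t)=\lambda_\alpha t$. Each $I_{i,j}^k$ has length $N^{-k-1}$, so
\[
0<\underline{\alpha}_{i,j}^k\leq \overline{\alpha}_{i,j}^k\leq \underline{\alpha}_{i,j}^k+\omega(N^{-k-1})\leq (1+\varepsilon_k)\,\underline{\alpha}_{i,j}^k,\qquad \varepsilon_k:=\frac{\omega(N^{-k-1})}{m}\to 0 .
\]
Here positivity of $\alpha$ is used essentially: without it the ratio $\overline{\alpha}_{i,j}^k/\underline{\alpha}_{i,j}^k$ could blow up.

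\textbf{Step 2 (entrywise comparison).} By \eqref{3.2}, $\overline{P}_k$ and $\underline{P}_k$ have the same zero pattern. Step 1 then gives the entrywise inequality $0\leq \underline{P}_k\leq \overline{P}_k\leq (1+\varepsilon_k)\underline{P}_k$. For nonnegative matrices, $0\le A\le B$ entrywise implies $\rho(A)\le\rho(B)$; this is standard, e.g.\ via $\|A^n\|\le\|B^n\|$ in the max-row-sum norm and Gelfand's formula, see \cite{Horn}. Hence
\[
\rho(\underline{P}_k)\leq\rho(\overline{P}_k)\leq (1+\varepsilon_k)\,\rho(\underline{P}_k).
\]

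\textbf{Step 3 (passing to the limit).} By the preceding theorem, $\rho(\overline{P}_k)\to\rho^*$. Step 2 gives
\[
\frac{\rho(\overline{P}_k)}{1+\varepsilon_k}\le\rho(\underline{P}_k)\le\rho(\overline{P}_k),
\]
so squeezing yields $\rho(\underline{P}_k)\to\rho^*$. Thus $\rho_*$ exists and $\rho_*=\rho^*$.

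The only delicate point is Step 1, namely obtaining a ratio bound that is uniform in $i,j$ at a fixed level $k$. That is exactly where compactness and strict positivity of $\alpha$ enter. The rest is standard monotonicity of the spectral radius for nonnegative matrices, with no irreducibility needed.
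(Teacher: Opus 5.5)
Your proof is correct. It matches the standard argument behind this cited result; the paper gives no proof of its own and only refers to \cite{Ruan2023}, whose argument I am reconstructing rather than quoting. Compactness and strict positivity give $\min_I\alpha>0$, uniform continuity then gives the entrywise bound $\underline{P}_k\le\overline{P}_k\le(1+\varepsilon_k)\underline{P}_k$ on the common zero pattern, and monotonicity of the spectral radius for nonnegative matrices closes the squeeze. The only difference is that you obtain the existence of $\lim_k\rho(\underline{P}_k)$ from the squeeze rather than from a separate monotonicity statement for $\rho(\underline{P}_k)$, which is valid.
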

\section{Oscillation sums and box dimension of graphs}\label{sec-lemma}
Let $f:I=[0,1]\to\mathbb {R}$ be continuous. For each  $k\in\mathbb {N}$ and a closed interval $E\subset I$, let
\[
E_j^k
=
E\cap
\left[
\frac{j-1}{N^k},
\frac{j}{N^k}
\right],\quad j\in \mathbb{N}_{N^k}.
\]
For any subset $U\subset I$, define the oscillation of $g$ on $U$ by
\[
\Omega_f(U)
=
\sup_{x_1,x_2\in U}
|f(x_1)-f(x_2)|.
\]
Furthermore, define
\begin{equation}\label{eq:oscillation-sum}
\Omega^k_f(E)
=
\sum_{j=1}^{N^k}
\Omega_f(E_j^k).
\end{equation}
It is clear that the sequence $\{\Omega^k_f(E)\}_{k=1}^{\infty}$ is non-decreasing. 
 The following lemma is vital for establishing a relation between the upper and lower box dimension of the graph of the function by its oscillation.

\begin{lemma}\label{box}\cite[Lemma 4.1]{Ruan2023}
Let $f$ be a continuous function on $I$. Then
\begin{equation*}
\overline{\dim_B}\; \Gamma_f
\leq
1+\limsup_{k\to\infty}\frac{\log(\Omega^k_f(I)+1)}{k\log N},
\end{equation*}
and
\begin{equation*}
\underline{\dim_B}\; \Gamma_f
\geq
1+\liminf_{k\to\infty}\frac{\log(\Omega^k_f(I)+1)}{k\log N}.
\end{equation*}
\end{lemma}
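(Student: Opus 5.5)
The plan is to compare the number of $\varepsilon$-coordinate squares meeting $\Gamma_f$ with the oscillation sum $\Omega^k_f(I)$ at the scale $\varepsilon=N^{-k}$. The two inequalities then follow by passing from the dyadic-type scales $N^{-k}$ to arbitrary $\varepsilon\to0^+$.

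\textbf{Upper bound.} Fix $k$, set $\varepsilon=N^{-k}$, and let $I_j^k=[(j-1)N^{-k},jN^{-k}]$ for $j\in\mathbb{N}_{N^k}$.
\begin{enumerate}
\item Over each column $I_j^k$, the graph $\{(x,f(x)):x\in I_j^k\}$ lies in a vertical segment of height $\Omega_f(I_j^k)$. By continuity the range $f(I_j^k)$ is an interval of that length.
\item Such an interval meets at most $\Omega_f(I_j^k)/\varepsilon+2$ of the $\varepsilon$-grid intervals in the $y$-direction. The graph over $I_j^k$ can also touch squares in the neighbouring columns, but only at the shared endpoints; this costs at most a constant factor.
\item Summing over $j$ gives
\[
N_{\Gamma_f}(N^{-k})\le C\bigl(N^k\,\Omega^k_f(I)+N^k\bigr)=C N^k\bigl(\Omega^k_f(I)+1\bigr).
\]
\item For general $\varepsilon$, choose $k$ with $N^{-k-1}<\varepsilon\le N^{-k}$. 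Then $N_{\Gamma_f}(\varepsilon)\le c\,N_{\Gamma_f}(N^{-k-1})$, since each $\varepsilon$-square meets a bounded number of $N^{-k-1}$-squares and vice versa, and $\log(1/\varepsilon)\ge k\log N$.
\item Taking $\limsup$ gives the first inequality. The ratio $(k+1)/k\to1$ does not affect the limit.
\end{enumerate}

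\textbf{Lower bound.} Fix $k$ and $\varepsilon=N^{-k}$ again.
\begin{enumerate}
\item Over the column $I_j^k$, the range $f(I_j^k)$ is connected of length $\Omega_f(I_j^k)$. Covering it with grid intervals of length $\varepsilon$ requires at least $\max\{1,\Omega_f(I_j^k)/\varepsilon\}$ of them.
\item Each square in the column strip $I_j^k\times\mathbb{R}$ containing a graph point is counted. A square can be shared only with adjacent columns, through its boundary, so the overcount is at most a factor of $2$.
\item Using $\max\{1,t\}\ge (1+t)/2$, this gives
\[
N_{\Gamma_f}(N^{-k})\ge \tfrac14\,N^k\bigl(\Omega^k_f(I)+1\bigr).
\]
\item For general $\varepsilon$, choose $k$ with $N^{-k-1}<\varepsilon\le N^{-k}$. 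Then $N_{\Gamma_f}(\varepsilon)\ge c\,N_{\Gamma_f}(N^{-k})$ and $\log(1/\varepsilon)\le (k+1)\log N$.
\item Taking $\liminf$ gives the second inequality.
\end{enumerate}

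The only delicate point is the passage between arbitrary $\varepsilon$ and the scales $N^{-k}$, which must not change the limits. Two facts make this work. First, box counts at comparable scales differ by at most a bounded multiplicative constant. Second, $\Omega^k_f(I)$ is non-decreasing in $k$, and moreover $\Omega^{k+1}_f(I)\le N\,\Omega^k_f(I)$, because each $I_{j}^{k+1}$ lies inside a single $I_{j'}^{k}$. Hence replacing $k$ by $k\pm1$ changes $\log(\Omega^k_f(I)+1)$ by at most $\log N$. After dividing by $k\log N$, this vanishes in the limit.

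The counting itself is routine. The main care is the boundary-sharing overcount and ensuring that the added $1$ correctly absorbs columns where $f$ is nearly constant.
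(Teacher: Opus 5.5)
Your proof is correct. The paper gives no proof of this lemma; it quotes it from \cite[Lemma 4.1]{Ruan2023}. Your argument is the standard proof of that cited result: you count column by column to get $\tfrac14 N^k(\Omega^k_f(I)+1)\le N_{\Gamma_f}(N^{-k})\le C N^k(\Omega^k_f(I)+1)$, then compare box counts at scales between $N^{-k-1}$ and $N^{-k}$. This is the same two-sided estimate the paper uses in the lemma immediately after it.

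The boundary-overcount factor is unnecessary but harmless. Each coordinate square has a unique column index, so the per-column counts add exactly, apart from at most four extra squares at $x=0$ and $x=1$.
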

%
%
From this result, we derive a relation between the number of open balls of an arbitrary size $\varepsilon>0$ required to cover $E$ and the oscillation of $f$.
\begin{lemma}
    Let $f$ be a continuous function on $I$. Then for any $\varepsilon>0$ there exist constants $c_1,c_2>0$ such that
    \[
    c_1N^{k}(\Omega^{k}_f(I)+1)
    \leq
    \mathcal{N}_{\varepsilon}(\Gamma_f)
    \leq
    c_2N^{k+1}(\Omega^{k+1}_f(I)+1),
    \]
    where $\frac{1}{N^{k+1}}\leq\varepsilon\leq \frac{1}{N^k}$.
\end{lemma}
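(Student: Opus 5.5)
We compare covers by $\varepsilon$-balls with covers by $\delta$-coordinate squares at the two grid scales $\delta=N^{-k}$ and $\delta=N^{-(k+1)}$ that sandwich $\varepsilon$, and then count squares column by column using oscillations. Throughout, $\mathcal{N}_\varepsilon(\Gamma_f)$ is the minimal number of open $\varepsilon$-balls covering $\Gamma_f$, and $N^{-(k+1)}\le \varepsilon\le N^{-k}$. The constants $c_1,c_2$ should depend only on $N$, not on $k$ or $\varepsilon$.

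\emph{Column counts.} The first step is a two-sided count of squares in a single column. Fix $\delta=N^{-m}$ and a column $I_j^m\times\mathbb{R}$. Since $f$ is continuous, $f(I_j^m)$ is an interval of length $\Omega_f(I_j^m)$. The number of $\delta$-coordinate squares in that column meeting $\Gamma_f$ is therefore at least $\max\{1,\Omega_f(I_j^m)/\delta\}$ and at most $\Omega_f(I_j^m)/\delta+2$. Summing over $j\in\mathbb{N}_{N^m}$ and writing $N_\delta$ for the total number of $\delta$-coordinate squares meeting $\Gamma_f$ gives
\[
\tfrac12 N^m\bigl(\Omega^m_f(I)+1\bigr)\le N_\delta \le 2N^m\bigl(\Omega^m_f(I)+1\bigr).
\]
The lower bound uses $\max\{a,b\}\ge (a+b)/2$, and the same inequality with the column sums gives the stated form. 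This is the same computation underlying Lemma \ref{box}.

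\emph{Upper bound.} Take $m=k+1$, so $\delta=N^{-(k+1)}\le\varepsilon$. Each $\delta$-square has diameter $\sqrt2\,\delta$, so it lies inside a ball of radius $\sqrt2\,\delta\le 2\varepsilon$. An $\varepsilon$-ball is not automatically enough, so split each $\delta$-square into $4$ subsquares of side $\delta/2$. Each subsquare has diameter $\delta/\sqrt2<\varepsilon$, so it is covered by one open ball of radius $\varepsilon$ centred at its centre. Hence $\mathcal{N}_\varepsilon(\Gamma_f)\le 4N_{N^{-(k+1)}}\le 8N^{k+1}(\Omega^{k+1}_f(I)+1)$, which gives $c_2=8$.

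\emph{Lower bound.} Take $m=k$, so $\varepsilon\le\delta=N^{-k}$. Any ball of radius $\varepsilon\le\delta$ meets at most $9$ of the $\delta$-coordinate squares, namely a $3\times3$ block. Every square meeting $\Gamma_f$ must meet some ball in a cover of $\Gamma_f$, so $N_{N^{-k}}\le 9\,\mathcal{N}_\varepsilon(\Gamma_f)$. Hence $\mathcal{N}_\varepsilon(\Gamma_f)\ge \tfrac1{18}N^k(\Omega^k_f(I)+1)$, which gives $c_1=1/18$.

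The main point needing care is the lower column count. The ``$+1$'' must be justified by the fact that each column contributes at least one square even when the oscillation is tiny, which is what makes $N_\delta \gtrsim N^m(\Omega^m_f(I)+1)$ hold uniformly. The remaining step, comparing balls with squares, only introduces absolute constants.
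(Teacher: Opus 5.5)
Your proof is correct, and it uses the same sandwich idea as the paper: place $\varepsilon$ between the grid scales $N^{-(k+1)}$ and $N^{-k}$, then relate grid counts to the oscillation sums $\Omega^m_f(I)$. The difference lies in how the second step is justified.

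The paper writes $\mathcal{N}_{N^{-k}}(\Gamma_f)\le\mathcal{N}_\varepsilon(\Gamma_f)\le\mathcal{N}_{N^{-(k+1)}}(\Gamma_f)$ and then cites Lemma \ref{box}. As stated, that lemma only compares exponents, through the $\limsup$ and $\liminf$ of $\log(\Omega^k_f(I)+1)/(k\log N)$. It therefore cannot by itself give a two-sided inequality with constants uniform in $k$ and $\varepsilon$; it leans on the unstated proof of the cited result. Your column count
\[
\frac12 N^m\bigl(\Omega^m_f(I)+1\bigr)\le N_{N^{-m}}\le 2N^m\bigl(\Omega^m_f(I)+1\bigr),
\]
together with the $9$-square comparison for open balls, supplies exactly this quantitative step. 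It also gives the statement's loosely worded quantifier ("for any $\varepsilon$ there exist constants") its intended meaning, with absolute constants independent of $\varepsilon$, $k$, $N$ and $f$.

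There are two harmless slips. First, a $\delta$-square already lies in the open $\varepsilon$-ball about its centre, because its half-diagonal is $\delta/\sqrt2<\delta\le\varepsilon$. The four-fold subdivision is therefore unnecessary and only costs a constant. Second, your upper bound on $N_\delta$ for \emph{all} coordinate squares meeting $\Gamma_f$ misses up to four squares lying just outside $[0,1]\times\mathbb{R}$ that touch the graph at $x=0$ or $x=1$. For example, $f\equiv0$ gives $N_\delta=2N^m+4>2N^m(\Omega^m_f(I)+1)$. This does not affect your conclusion, since the squares inside the columns $I_j^m\times\mathbb{R}$ already cover $\Gamma_f$. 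Alternatively, $4\le 2N^m$ gives $N_\delta\le 4N^m(\Omega^m_f(I)+1)$.
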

\begin{proof}
    By the definition of box-dimension, for $\varepsilon_j=\frac{1}{N^j}$, we have
    \[
    \mathcal{N}_{\varepsilon_k}(\Gamma_f)
    \leq
    \mathcal{N}_{\varepsilon}(\Gamma_f)
    \leq
    \mathcal{N}_{\varepsilon_{k+1}}(\Gamma_f).
    \]
    Applying Lemma \ref{box}, we easily get
    \[
    c_1N^{k}(\Omega^k_f(I)+1)
    \leq
    \mathcal{N}_{\varepsilon}(\Gamma_f)
    \leq
    c_2 N^{k+1}(\Omega^{k+1}_f(I)+1)
    \]
    for some $c_1,c_2>0$.
\end{proof}

\subsection{Oscillation estimate}
The following estimate will be used repeatedly in the subsequent part of the article.
\begin{lemma}\label{Osc}
Let $\alpha$ be a Lipschitz function on $I$. Then there exists a constant $\beta> 0$ such that for 
$i\in \mathbb{N}_N$, every set $Q\subset I_i$, and any $q\in Q$,
\[
\left|
\Omega_f(Q)
-
|\alpha(q)|\,\Omega_f\left(L_i^{-1}(Q)\right)
\right|
\leq
\beta |Q|,\;\, \text{where}\;\, |Q|
=
\sup\{|x_1-x_2|:x_1,x_2\in Q\}
\]
denotes the diameter of $Q$.
\end{lemma}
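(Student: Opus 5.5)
We work on $I=[0,1]$ with uniform partition, so $L_i(x)=\frac{x+i-1}{N}$, and we use the functional equation \eqref{2.6}:
\[
f(L_i(x))=\alpha(L_i(x))\bigl(f(x)-b(x)\bigr)+h(L_i(x)).
\]
The plan is to write, for $x_1,x_2\in Q\subset I_i$ with $x_1=L_i(u_1)$ and $x_2=L_i(u_2)$,
\[
f(x_1)-f(x_2)=\alpha(x_1)\bigl(f(u_1)-f(u_2)\bigr)+\bigl(\alpha(x_1)-\alpha(x_2)\bigr)\bigl(f(u_2)-b(u_2)\bigr)-\alpha(x_1)\bigl(b(u_1)-b(u_2)\bigr)+\bigl(h(x_1)-h(x_2)\bigr).
\]
We then bound the last three terms by a constant times $|x_1-x_2|$, and replace $\alpha(x_1)$ in the first term by $\alpha(q)$ at a cost proportional to $|Q|$.

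The constants are controlled as follows. Let $M=\|f\|_\infty+\|b\|_\infty$, which is finite since $f$ and $b$ are continuous on a compact interval. Since $\alpha$ is Lipschitz with constant $\lambda_\alpha$, the second term is at most $\lambda_\alpha M|x_1-x_2|$. The function $b$ is linear with slope $s_b$, and $|u_1-u_2|=N|x_1-x_2|$, so the third term is at most $\|\alpha\|_\infty |s_b| N|x_1-x_2|$. The function $h$ is linear on $I_i$ with slope $s_{h,i}$, so the fourth term is at most $\max_i|s_{h,i}|\,|x_1-x_2|$. For the first term, $|\alpha(x_1)-\alpha(q)|\le\lambda_\alpha|Q|$ and $|f(u_1)-f(u_2)|\le 2\|f\|_\infty$. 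Combining these gives
\[
\bigl||f(x_1)-f(x_2)|-|\alpha(q)|\,|f(u_1)-f(u_2)|\bigr|\le\beta|Q|,
\]
with $\beta=\lambda_\alpha(M+2\|f\|_\infty)+\|\alpha\|_\infty|s_b|N+\max_i|s_{h,i}|$. This constant is independent of $i$, $Q$ and $q$, and we use $|\,|A+B|-|A|\,|\le|B|$ here.

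Next we pass to suprema over $x_1,x_2\in Q$, or equivalently over $u_1,u_2\in L_i^{-1}(Q)$, using that $L_i$ is a bijection from $I$ onto $I_i$. The upper inequality gives $\Omega_f(Q)\le|\alpha(q)|\,\Omega_f(L_i^{-1}(Q))+\beta|Q|$. The reverse inequality gives $|\alpha(q)|\,\Omega_f(L_i^{-1}(Q))\le\Omega_f(Q)+\beta|Q|$. Together these yield the claim.

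The only delicate point is bookkeeping: $\beta$ must not depend on $Q$. This works because all quantities involved, namely $\|f\|_\infty$, $\lambda_\alpha$, and the slopes of $b$ and $h$, are global. No real obstacle is expected.
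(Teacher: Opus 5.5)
Your proposal is correct and follows essentially the same route as the paper. Both arguments expand $f(x_1)-f(x_2)$ via the functional equation \eqref{2.6}, absorb the variation of $\alpha$ (Lipschitz) and of $h$ (linear on $I_i$) into an $O(|Q|)$ error, and then take suprema over $Q$ in both directions. Your version is in fact slightly more careful: you keep the term $-\alpha(x_1)\bigl(b(u_1)-b(u_2)\bigr)$ coming from the linear function $b$, which the paper's computation silently drops, and this only affects the value of $\beta$.
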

\begin{proof}
    By the definition of oscillation
    \[
    \Omega_f(Q)=  \sup_{x_1,x_2\in Q} |f(x_1)-f(x_2)|.
    \]
    Using the self-referential relation of $f$, i.e., Equation \eqref{2.6}, we get
    \[
   \Omega_f(Q)=
    \sup_{x_1,x_2\in Q}|h(x_1)-h(x_2) +\alpha(x_1)f(L_i^{-1}(x_1))-\alpha(x_2)f(L_i^{-1}(x_2)).
    \]
     Now, for any $x_1,x_2 \in Q$ we have
    \[
    |h(x_1)-h(x_2)|=|x_1-x_2| |N(y_i-y_{i-1})|\leq N|Q|\max_{j\in \mathbb{N}_N}|y_j-y_{j-1}|
    \]
    and 
    \begin{align*}
         | \alpha(x_1)f(L_i^{-1}(x_1))-\alpha(x_2)f(L_i^{-1}(x_2))|&\leq |\alpha(x_1)-\alpha(q)||f(L_i^{-1}(x_1))|-|\alpha(x_2)-\alpha(q)|\\
         & |f(L_i^{-1}(x_2))| + |\alpha(q)||f(L_i^{-1}(x_1))-f(L_i^{-1}(x_2))|.
    \end{align*}
    Let $B_f:=\max\limits_{x\in I}|f(x)|$ and $\lambda_\alpha$ denote the Lipschitz constant of $\alpha$. Then, we have
  \begin{align*}
         | \alpha(x_1)f(L_i^{-1}(x_1))-\alpha(x_2)f(L_i^{-1}(x_2))|&\leq 2B_f \lambda_{\alpha}|Q|+ |\alpha(q)| \Omega_f\left(L_i^{-1}(Q)\right).
    \end{align*}
    Thus we get
    $\Omega_f(Q)\leq|\alpha(q)|\Omega_f\left(L_i^{-1}(Q)\right)
    +
    \beta |Q|, $
    where  $\beta:=2B_f \lambda_\alpha+ N\max\limits_{j\in \mathbb{N}_N}|(y_j-y_{j-1})|$.
    Using a similar argument, we get
    \[
    \Omega_f(Q)\geq|\alpha(q)|\Omega_f\left(L_i^{-1}(Q)\right)
    -
    \beta |Q|.
    \]

\end{proof}

\begin{lemma}\label{OscLemma}
	Let $f^*$ be the generalized fractal interpolation function with vertical scaling function $\alpha$. If $||\alpha||_\infty\leq\frac{1}{N},$ then $\dim_B\; \Gamma_f=1.$
\end{lemma}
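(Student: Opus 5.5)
We show that the oscillation sums $\Omega^k_f(I)$ grow at most polynomially in $k$; Lemma \ref{box} then gives $\overline{\dim}_B\Gamma_f\le 1$. Since the graph of a continuous function always has lower box dimension at least $1$, this gives $\dim_B\Gamma_f=1$. The tool is Lemma \ref{Osc}, which compares the oscillation on a piece of $I_i$ with the oscillation on its preimage under $L_i$.

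\textbf{Step 1 (recursion).} Fix $k\ge 1$. Each level-$(k+1)$ interval $I^{k+1}_m$ lies in a unique $I_i$, and $L_i^{-1}(I^{k+1}_m)=I^k_j$ for a unique $j$. As $m$ runs over the level-$(k+1)$ intervals inside $I_i$, the preimages run exactly once over all $N^k$ intervals $I^k_j$. Apply Lemma \ref{Osc} with $Q=I^{k+1}_m$, $|Q|=N^{-(k+1)}$, and $|\alpha(q)|\le\|\alpha\|_\infty$:
\[
\Omega_f(I^{k+1}_m)\le \|\alpha\|_\infty\,\Omega_f(I^k_j)+\beta N^{-(k+1)}.
\]
Summing over $m$ (first within each $I_i$, then over $i\in\mathbb{N}_N$) gives
\[
\Omega^{k+1}_f(I)\le N\|\alpha\|_\infty\,\Omega^k_f(I)+\beta .
\]
Here the error terms contribute $N^{k+1}\cdot\beta N^{-(k+1)}=\beta$. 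Note that $\Omega^k_f$ refers to the fixed grid $I^k_j$, so the sum is well defined.

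\textbf{Step 2 (iteration).} Since $N\|\alpha\|_\infty\le 1$, the recursion reduces to $\Omega^{k+1}_f(I)\le\Omega^k_f(I)+\beta$. By induction,
\[
\Omega^k_f(I)\le \Omega^1_f(I)+(k-1)\beta\le C k .
\]
Then $\log(\Omega^k_f(I)+1)/(k\log N)\to 0$, so Lemma \ref{box} yields $\overline{\dim}_B\Gamma_f\le 1$. Combined with $\underline{\dim}_B\Gamma_f\ge 1$, this gives $\dim_B\Gamma_f=1$.

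\textbf{Main obstacle.} The only delicate point is the borderline case $N\|\alpha\|_\infty=1$. There geometric decay is lost, and we rely on the additive error in Lemma \ref{Osc} being of order $|Q|$. That error summed over $N^{k+1}$ pieces stays bounded, so it produces only linear growth, which is still subexponential.

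The indexing bijection in Step 1 also needs checking. For $x\in I^k_j$ we have $L_i(x)\in I^{k+1}_{(i-1)N^k+j}$, so each level-$(k+1)$ interval is hit exactly once. Lemma \ref{Osc} as stated assumes $\alpha$ is Lipschitz, so we use that hypothesis here as well.
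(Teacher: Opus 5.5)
Your proof is correct and follows essentially the same route as the paper. You iterate Lemma \ref{Osc}, use $N\|\alpha\|_\infty\le 1$ to get the linear bound $\Omega^k_f(I)\le C+k\beta$, and conclude with Lemma \ref{box} together with $\underline{\dim}_B\Gamma_f\ge 1$. The only difference is bookkeeping: you sum first to get the level recursion $\Omega^{k+1}_f(I)\le N\|\alpha\|_\infty\,\Omega^k_f(I)+\beta$, whereas the paper iterates $k$ times on a single interval to get $\Omega_f(I^k_j)\le N^{-k}(\Omega_f(I)+k\beta)$ and then sums over $j$.
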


\begin{proof}
	Let $a=||\alpha||_\infty$.
	From Lemma \ref{Osc}, for any $k\in\mathbb{N}$, we have
	\begin{align*}
		\Omega_f(I_j^k)&\leq \alpha \Omega_f(L_j^{-1}(I_j^k)) + \beta \frac{1}{N^k}\\
		&\leq\vdots\\
		&\leq a^k\Omega_f(I) + \beta\left(\frac{a^{k-1}}{N}+ \ldots + \frac{1}{N^k}\right).
	\end{align*}
Since $Na\leq1$, we get
\[
\Omega_f(I_j^k)\leq N^{-k}\Omega_f(I) + \beta(\frac{k}{N^k})= N^{-k}(C_0+k\beta),\; \text{where}\; C_0=\Omega_f(I).
\]
Now, we calculate the upper bound for the oscillation at the $k$-th level, that is,
\begin{align*}
	\Omega_f^k(I)&=\sum_{j=1}^{N^k}\Omega_f(I_j^k)\\
	&\leq N^k\cdot N^{-k}(C_0+k\beta)\\
	&=C_0+k\beta.
\end{align*}
Therefore, by Lemma \ref{box}, we have $\dim_B(\Gamma_f)\leq 1$. Also, for any continuous function $f$ on $I,$ we get $ \dim_B(\Gamma_f)\geq 1.$   
 \end{proof}

\section{Main results}\label{sec-main}
In this section, we prove the main results of this paper. First, we compute  the upper bound of the Assouad spectrum of the graph of the generalized affine fractal functions. We use the covering techniques and oscillation estimates as proved in Lemma \ref{Osc} and Lemma \ref{OscLemma}. Then we provide an estimation of the lower bound of the associated graph under some restrictions on the scaling functions.  
   \begin{theorem}\label{Upperbound}
	Let $\Gamma_G$ be the graph of a generalized fractal interpolation function defined in Equation \eqref{2.6}, 
   	where $x_i-x_{i-1}=\frac{x_N-x_0}{N}$ and  $\alpha: [x_0,x_N]\to (0,1)$   is Lipschitz. Let $D^*$ be the upper box dimension of $\Gamma_G$. Then   
   		\begin{enumerate}
   		\item If $N||\alpha||_{\infty}\leq1,$ then $ \dim_A^{\theta}(\Gamma_G)=1 \; \text{for}\; \theta\in(0,1). $ Consequently, $ \dim_{qA}(\Gamma_G)=1.$
   		\item  If $N||\alpha||_{\infty}>1$, then  \[\dim_{A}^\theta(\Gamma_G)\leq\min\bigg\{2,D^*+\frac{\theta}{1-\theta}\bigg(1+\frac{\log||\alpha||_{\infty}}{\log N}\bigg)\bigg\}.\]
   			\end{enumerate}
   	 \end{theorem}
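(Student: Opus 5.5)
We normalise $[x_0,x_N]=I=[0,1]$, so that $L_i(x)=(x+i-1)/N$. Throughout, $a:=\|\alpha\|_\infty$. Fix $x\in I$, $0<R<1$ and $r=R^{1/\theta}$. Choose integers $m\le n$ with $N^{-m-1}<R\le N^{-m}$ and $N^{-n-1}<r\le N^{-n}$, so that $n-m\approx \frac{1-\theta}{\theta}\,m$ up to an additive constant.

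The ball $B((x,f(x)),R)\cap\Gamma_G$ projects into an interval $Q$ of length at most $2R$. This interval meets at most three consecutive $N$-adic intervals $I^m_j$. It therefore suffices to bound $N_r(\Gamma_G|_{I_j^m})$ for a single level-$m$ interval. As in Lemma~\ref{box} and the lemma following it, a covering by $r$-balls of the graph over $I_j^m$ costs, up to constants, the number of level-$n$ subintervals plus the oscillation sum measured in units of $r$:
\[
N_r\bigl(\Gamma_G|_{I_j^m}\bigr)\lesssim N^{n-m}+N^{n}\,\Omega_f^{n}(I_j^m).
\]

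The key step is to transfer the oscillation sum on $I_j^m$ back to the whole interval by iterating Lemma~\ref{Osc} $m$ times. Write $I_j^m=L_{i_1}\circ\cdots\circ L_{i_m}(I)$. Each level-$n$ subinterval $J\subset I_j^m$ has diameter $N^{-n}$, and its $s$-th preimage has diameter $N^{-n+s}$. Iterating gives
\[
\Omega_f(J)\le a^m\,\Omega_f\bigl(L^{-1}_{i_m}\cdots L^{-1}_{i_1}(J)\bigr)+\beta\sum_{s=0}^{m-1}a^{s}N^{-n+s}.
\]
Summing over the $N^{n-m}$ subintervals $J$ yields
\[
\Omega_f^n(I_j^m)\le a^m\,\Omega_f^{n-m}(I)+\beta N^{-m}\sum_{s<m}(aN)^s .
\]

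For the upper box dimension we use Lemma~\ref{box}: for every $\epsilon>0$ we have $\Omega_f^{k}(I)\lesssim N^{k(D^*-1+\epsilon)}$.

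In case (2), where $aN>1$, the error term is $\lesssim a^{m}$. This gives
\[
N_r\lesssim N^{n-m}+N^n a^m\bigl(N^{(n-m)(D^*-1+\epsilon)}+1\bigr)=N^{(n-m)(D^*+\epsilon)}\cdot (aN)^m+\dots
\]
Since $(aN)^m=N^{m(1+\log_N a)}$ and $m\approx\frac{\theta}{1-\theta}(n-m)$, we get
\[
N_r\lesssim (R/r)^{D^*+\epsilon+\frac{\theta}{1-\theta}(1+\log_N a)}.
\]
Letting $\epsilon\to 0$ gives the bound. The cap at $2$ is the trivial bound valid for every subset of the plane.

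For case (1), where $aN\le1$, Lemma~\ref{OscLemma} and its proof give $\Omega_f^{n-m}(I)\lesssim 1+(n-m)$. The error sum is also $\le \beta N^{-m}m$. Hence
\[
N_r\lesssim N^{n-m}+N^{n-m}(aN)^m(n-m)+N^{n-m}m\lesssim N^{n-m}\,n .
\]
Because $n\approx \log(1/r)$ is only logarithmic in $R/r=N^{n-m}$ (the ratio $n/(n-m)$ is bounded for fixed $\theta$), every exponent $1+\epsilon$ is admissible. This gives $\dim_A^\theta\le1$. The lower bound $\dim_A^\theta\ge\overline{\dim}_B=1$ holds for continuous graphs. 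Letting $\theta\to1$ then yields $\dim_{qA}=1$, provided the bounds are uniform; they are, since the estimate gives $1$ for each $\theta$.

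The main obstacle is the covering comparison on a small interval $I_j^m$ rather than on all of $I$, i.e. a localized version of Lemma~\ref{box}. I would prove it directly: the graph over each level-$n$ interval $J$ lies in a rectangle of width $N^{-n}$ and height $\Omega_f(J)$, which is covered by $\lesssim 1+N^n\Omega_f(J)$ balls of radius $r\asymp N^{-n}$. A second, minor point is tracking the error term of Lemma~\ref{Osc} under the iteration, in particular the boundary case $aN=1$, which produces only a factor linear in $m$.
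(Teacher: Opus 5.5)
Your argument is correct and is essentially the paper's proof. You pull the level-$n$ oscillation sum over a level-$m$ cylinder back to $a^m\Omega_f^{n-m}(I)+\beta N^{-m}\sum_{s<m}(aN)^s$ by iterating Lemma~\ref{Osc}, insert the box-counting bound, and split according to $aN\le 1$ versus $aN>1$. You are in fact more careful than the paper in reducing $B(x,R)\cap\Gamma_G$ to at most three level-$m$ cylinders and in keeping the $\epsilon$ in the box-counting estimate.

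One citation needs correcting. The bound $\Omega_f^k(I)\lesssim N^{k(D^*-1+\epsilon)}$ does not follow from the inequality stated in Lemma~\ref{box}, which goes the other way. It follows from $\mathcal{N}_{N^{-k}}(\Gamma_f)\gtrsim N^k\Omega_f^k(I)$, i.e.\ the left-hand inequality of the lemma following Lemma~\ref{box}, which holds because the graph over each column $J$ is connected and has vertical extent $\Omega_f(J)$.
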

    \begin{proof}	
   	Let $\theta_0=\min\left(\frac{2-D^*}{2-D^*+ \log_N(N||\alpha||_\infty)},1\right)$. Consider any $R>0$ and $\theta\in(0,\theta_0)$. Let $k,q\in \mathbb{N}$ such that
   	$$\frac{1}{N^{k+1}}\leq R\leq \frac{1}{N^k},\; \frac{1}{N^{q+1}}\leq R^{1/\theta}\leq \frac{1}{N^q}.$$
   	
   	By the covering estimate obtained from the oscillation method, there exists $C>0$ such that
   	\[
   	\mathcal{N}_{R^{1/\theta}}((\Gamma_G))
   	\leq
   	C N^q\bigl(\Omega^{q}_f(I)+1\bigr)
   	=CN^q\left(\sum_{|v|=k}\Omega^{q-k}_f(I_v)+1\right).
   	\]
   	Let \(w\in\mathbb{N}^k\) be a word of length \(k\). Since $F_w(\Gamma_G)$ is the level $k$ 
   	portion of the graph, from the oscillation estimate we have
   	\begin{align*}
   		\mathcal{N}_{R^{1/\theta}}(F_w(\Gamma_G))&
   		\leq
   		CN^{q-k}\bigl(N^k\Omega^{q-k}_f(I_w)+1\bigr)\\
   		&\leq C N^{q}\bigl(\Omega^{q-k}_f(I_w)+N^{-k}\bigr).
   	\end{align*}
   	\begin{figure}[H]
   		\centering
   		
   		\begin{minipage}{0.4\textwidth}
   			\centering
   			\includegraphics[width=\textwidth]{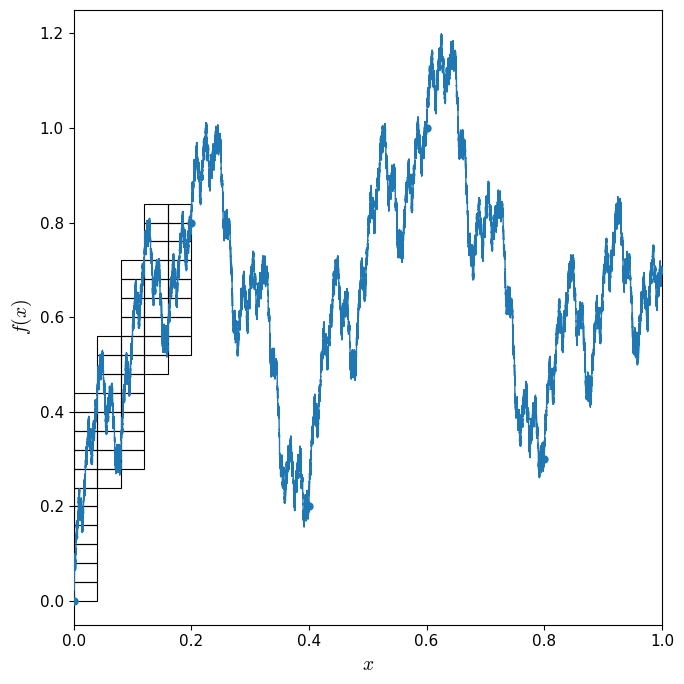}
   			\caption{Affine fractal interpolation function with
   				$\alpha_i=0.4$, $1\leq i\leq 5$.}
   			\label{fig:full}
   		\end{minipage}
   		\hfill
   		\begin{minipage}{0.4\textwidth}
   			\centering
   			\includegraphics[width=\textwidth]{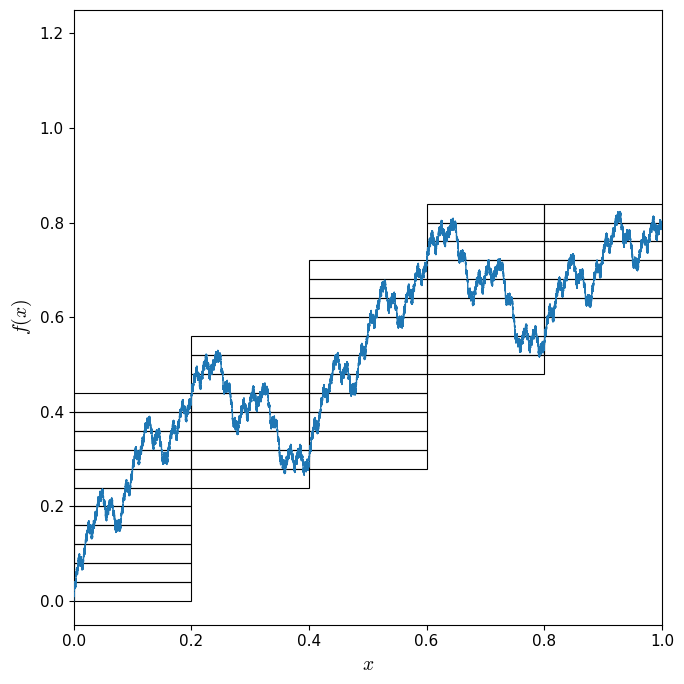}
   			\caption{Rescaled first cylinder with the corresponding
   				box covering.}
   			\label{fig:part}
   		\end{minipage}
   		
   	\end{figure}
   	By the definition of oscillation,
   	\[
   	\Omega^{q-k}_f(I_w)
   	=
   	\sum_{|v|=q-k} \Omega_f(I_{wv}).
   	\]
   	
   	Let $a:= ||\alpha||_{\infty}$. Applying the oscillation estimate obtained in Lemma \ref{Osc} recursively \(k\) times to each interval \(I_{wv}\), we obtain
   	\begin{align*}
   		\Omega_f(I_{wv})
   		&\leq a\Omega(f,I_{w'v})+ \beta N^{-q}\\
   		&\vdots\\
   		&\leq a^k \Omega(f,I_v)
   		+
   		\beta\sum_{r=0}^{k-1}a^rN^{-(q-r)}.
   	\end{align*}

   	Summing over all words \(v\) of length \(q-k\) gives
   	\begin{align*}
   		\Omega^{q-k}_f(I_w)
   		&=
   		\sum_{|v|=q-k}\Omega_f(I_{wv})\\
   		&\leq
   		a^k\sum_{|v|=q-k}\Omega_f(I_v)
   		+
   		\beta N^{q-k}\sum_{r=0}^{k-1}a^rN^{-(q-r)}\\
   		&=
   		a^k \Omega^{q-k}_f(I)
   		+
   		\beta N^{-k}\sum_{r=0}^{k-1}(aN)^r.
   	\end{align*}
   	
   	Hence,
   	\begin{equation}\label{eq:oscillation-cylinder}
   		\Omega^{q-k}_f(I_w)
   		\leq
   		a^k \Omega^{q-k}_f(I)
   		+
   		\beta N^{-k}\sum_{r=0}^{k-1}(aN)^r.
   	\end{equation}
   	Using the oscillation-box counting estimate, we get
   	\[
   	\Omega^{q-k}_f(I)
   	\leq
   	2N^{k-q}
   	\mathcal{N}_{N^{k+1}R^{1/\theta}}(G),
   	\]
   	and the fact that for any $\varepsilon>0$ there exists $C_{b}>0$ such that
   	\[
   	\mathcal{N}_{\varepsilon}(G)
   	\leq
   	C_b\varepsilon^{-D^*},
   	\]
   	which gives
   	\[
   	\Omega^{q-k}_f(I)
   	\leq
   	C_b'\,N^{k-q}
   	\bigl(N^{k+1}R^{1/\theta}\bigr)^{-D^{*}},
   	\]
   	where $C_b'=2C_b$. 
   	We now distinguish three cases:\\
   	\medskip
   	\noindent\textbf{Case 1:} Suppose $a N<1.$\\
   	Since $
   	\sum_{r=0}^{k-1}(aN)^r
   	\leq
   	\frac{1}{1-aN},
   	$
   	it follows from Equation \eqref{eq:oscillation-cylinder} that
   	\[
   	\Omega^{q-k}_f(I_w)
   	\leq
   	a^k \Omega^{q-k}_f(I)
   	+
   	\beta_1 N^{-k}
   	\leq
   	N^{-k} (\Omega^{q-k}_f(I)
   	+
   	\beta_1),
   	\]
   	where $\beta_1=\frac{\beta}{1-aN}$. Hence, we get
   	\begin{align*}
   		\mathcal{N}_{R^{1/\theta}}(F_w(G))
   		&\leq
   		N^q
   		\Bigl(
   		N^{-k}\Omega^{q-k}_f(I)
   		+
   		\beta_1 N^{-k}
   		+
   		N^{-k}
   		\Bigr)\\
   		&\leq N^q
   		\Bigl(
   		N^{-k}\Omega^{q-k}_f(I)
   		+
   		\beta' N^{-k}
   		\Bigr),
   	\end{align*}
   	where $\beta'=\beta_1 +1$. Putting this in the oscillation estimate, we get
   	\begin{align*}
   		\mathcal{N}_{R^{1/\theta}}(F_w(G))
   		&\leq
   		C_b'\,N^{q-k}
   		N^{k-q}
   		\left(N^{k+1}R^{1/\theta}\right)^{-D^*}
   		+
   		\beta 'N^{q-k}
   		\\
   		&\leq C_b'\,
   		\left(\frac{R}{R^{1/\theta}}\right)^{D^*}
   		+
   		\beta'	\left(\frac{R}{R^{1/\theta}}\right).
   	\end{align*}
   	Note that if $aN<1$, then by Lemma \ref{OscLemma}, we have $D^*=1.$ Thus, we get
   	\begin{align*}
   		\mathcal{N}_{R^{1/\theta}}(F_w(G))
   		\leq
   		C_\beta	\left(\frac{R}{R^{1/\theta}}\right),
   	\end{align*}
   	where $C_\beta=C_b'+\beta$.
   	Therefore, in this case, for all $\theta\in(0,1)$, we have
   	$
   	\dim_A^{\theta}(G)\leq 1
   	$ as $\theta_0=1$. Since $\dim^\theta_A(G)\geq \overline\dim_B(G)=1$, we have $
   	\dim_A^{\theta}(G)=1 .$ Hence
   	$\dim_{qA}(G)=1$.\\
   	
   	\noindent\textbf{Case 2:} Suppose \(aN=1\). \\
   	In this case,
   	\[
   	\sum_{r=0}^{k-1}(aN)^r=k.
   	\]
   	Thus
   	\[
   	\Omega^{q-k}_f(I_w)
   	\leq
   	a^k \Omega^{q-k}_f(I)
   	+
   	\beta_1 k N^{-k}
   	\leq
   	N^{-k} (\Omega^{q-k}_f(I)
   	+
   	\beta_1k).
   	\]
   	Also, note that $aN=1$ implies $\rho^*\leq1$ which again means $D^*=1$. Hence, by a similar calculation we get
   	\begin{align*}
   		\mathcal{N}_{R^{1/\theta}}(F_w(G))
   		&\leq C_b'\,
   		\left(\frac{R}{R^{1/\theta}}\right)
   		+
   		\beta'k	\left(\frac{R}{R^{1/\theta}}\right) .
   	\end{align*}
   	Note that $k\leq|\log_N R|$. For any  $\varepsilon>0$ there exists $R_0>R>0$  such that
   	\[ |\log_NR|\leq R^{-\varepsilon},\, \text{for} \; 0<R<R_0.\] Putting $R^{-\varepsilon}= \left(\frac{R}{R^{1/\theta}}\right)^{\frac{\varepsilon} {1/\theta-1}}$ we get
   	\begin{align*}
   		\mathcal{N}_{R^{1/\theta}}(F_w(G))
   		&\leq C_b'\,
   		\left(\frac{R}{R^{1/\theta}}\right)
   		+
   		\beta'	\left(\frac{R}{R^{1/\theta}}\right)^{1+\frac{\varepsilon} {1/\theta-1}} \\
   		&\leq (C_b'+\beta')	\left(\frac{R}{R^{1/\theta}}\right)^{1+\frac{\varepsilon} {1/\theta-1}}.
   	\end{align*}
   	Since $\varepsilon$ is arbitrary, we conclude that $\dim_A^{\theta}(G)\leq1$ for all $\theta\in(0,1) $. Thus  $\dim_A^{\theta}(G)=1$ which in term gives $\dim_{qA}(G)=1$.\\
   	\noindent
   	\textbf{Case 3:} Suppose $aN>1.$\\
   	Since
   	\[
   	\sum_{r=0}^{k-1}(aN)^r
   	=
   	\frac{(aN)^k-1}{aN-1}
   	\leq
   	\frac{(aN)^k}{aN-1},
   	\]
   	by Equation \eqref{eq:oscillation-cylinder}, we get
   	\begin{equation*}
   		\Omega^{q-k}_f(I_w)
   		\leq
   		a^k \Omega^{q-k}_f(I)
   		+
   		\beta' N^{-k}(aN)^k=
   		a^k \Omega^{q-k}_f(I)
   		+
   		\beta_2 a^k,
   	\end{equation*}
   	
   	where $\beta_2= \frac{\beta}{aN-1}$. Then
   	\[
   	\mathcal{N}_{R^{1/\theta}}(F_w(G))
   	\leq
   	N^q
   	\Bigl(
   	a^{k}\Omega^{q-k}_f(I)
   	+
   	\beta_2 a^k
   	+
   	N^{-k}
   	\Bigr).
   	\]
   	Substituting this estimate yields
   	\begin{align}\label{eq-inequalityN}
   		\mathcal{N}_{R^{1/\theta}}(F_w(G))
   		&\leq
   		C_b'\,a^{k}
   		N^{k-D^*(k+1)}
   		R^{-D^*/\theta}
   		+
   		\beta_2N^qa^k
   		+
   		N^{q-k}\\ \nonumber
   		&\leq C_b' N^{k-D^*}  a^k \bigg(\frac{R}{R^{1/\theta}}\bigg)^{D^*}
   		+ \beta_2\bigg(\frac{R}{R^{1/\theta}}\bigg) (Na)^k
   		+
   		N^{q-k}.
   	\end{align}	
   	Assume $\bigg(\frac{R}{R^{1/\theta}}\bigg)^{A'}=N^{k-D^*}a^k$ for some $A'\in\mathbb{R}$. Then, we can write
   	\begin{align*}
   		(k-D^*)\log N + k \log a&= A' \log\left(\frac{R}{R^{1/\theta}}\right)\\
   		&=A'(1-1/\theta)\cdot\log R \\
   		\implies A'=\frac{(k-D^*)\log N + k \log a}{(\frac{1}{\theta}-1)\log \frac{1}{R}}.
   	\end{align*}
   	Since $R\leq N^{-k}$, we have
   	
   	\begin{align*}
   		A'&\leq\frac{(k-D^*)\log N + k \log a}{(\frac{1}{\theta}-1) k\log N}\\
   		&= \cfrac{\theta}{1-\theta}\left(1-\cfrac{D^*}{k}+\cfrac{ \log a}{ \log N}\right):=A_k-D^*.
   	\end{align*}
   	Since $A_k$ is an increasing sequence, let $A= \lim\limits_{k\to \infty} A_k$. Then
   	\[
   	A =
   	D ^*+ \frac{\theta}{1-\theta}
   	\left(1 + \frac{\log a}{\log N}\right).
   	\]
   	
   	\noindent Next, assume $(Na)^k=\bigg(\frac{R}{R^{1/\theta}}\bigg)^{B}.$ By a similar argument, we get
   	\[
   	B=1 + \frac{\theta}{1-\theta}
   	\left(1 + \frac{\log a}{\log N}\right).
   	\]
   	Note that $A\geq B\geq1$, since $D^*\geq1$. Putting these expressions in  \eqref{eq-inequalityN}, we obtain
   	\begin{align*}
   		\mathcal{N}_{R^{1/\theta}}(B(x,R)\cap G)
   		&\leq
   		C_b' \left(\frac{R}{R^{1/\theta}}\right)^{A}
   		+
   		\beta_2 \left(\frac{R}{R^{1/\theta}}\right)^{B}
   		+
   		\left(\frac{R}{R^{1/\theta}}\right) \\  
   		&\leq C_\beta'' \left(\frac{R}{R^{1/\theta}}\right)^{A},
   	\end{align*}
   	where $C_\beta''= C_b'+\beta_2+1$. Thus, we have \[
   	\dim_A^{\theta}(G)\leq A  \, \text{for}\; \theta\in(0,\theta_0),\; \text{  where}\; \theta_0= \dfrac{2-D^*}{2-D^*+ \log_N(N||\alpha||_\infty)}.\]
   	Note that $A(\theta_0)=2$, and so we get a trivial upper bound of $2$ for $\theta\in[\theta_0,1)$. Therefore, the Assouad spectrum is given by $
   	\dim_A^\theta G
   	\le
   	\min\left\{
   	2,
   	A
   	\right\}.
   	$
   \end{proof}
   The next result provides an estimation of the lower bound of the Assouad spectrum of $\Gamma_G$.
   \begin{theorem}\label{Lowerbound}
   	Let $\Gamma_G$ be the graph of the  generalized fractal interpolation function defined in Equation \eqref{2.6}.
   	Let $D_*$ be the lower box dimension. If $N^{-1}<||\alpha||_{\infty}<N^{1-D_*}$, the lower bound on the Assouad spectrum is given by
   	$$\dim_{A}^\theta(G)\geq D_* +\frac{\theta}{1-\theta}\frac{1-\gamma}{\gamma}\left(\gamma+1-D_*\right) \; \, \text{for}\;\, \theta\in\left(0,\gamma\right),$$
   	where $\gamma=\log_N\left(\frac{1}{||\alpha||_\infty}\right).$
   \end{theorem}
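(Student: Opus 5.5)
We argue by exhibiting one ball at each scale whose covering number is large. The graph near a point where $\alpha$ attains (or nearly attains) $a:=\|\alpha\|_\infty$ carries the largest vertical stretching. Fix a point $x^*\in I$ with $\alpha(x^*)=a$. Because $\alpha$ is Lipschitz, there is a nested sequence of level-$k$ cylinders $I_w$, $|w|=k$, on which the composed scaling is comparable to $a^k$. Concretely, the reverse form of Lemma \ref{Osc},
\[
\Omega_f(Q)\ge |\alpha(q)|\,\Omega_f(L_i^{-1}(Q))-\beta|Q|,
\]
iterated $k$ times, gives
\[
\Omega_f(I_{wv})\ge c\,a^k\,\Omega_f(I_v)-\beta\sum_{r<k}a^rN^{-(q-r)}.
\]
Here the constant $c>0$ absorbs the product $\prod(\alpha(q_r)/a)$. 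To control it, I would choose the cylinder containing a fixed point of the maps $L_i$ near $x^*$; if needed, I would replace $a$ by $a-\delta$ and let $\delta\to0$ at the end. Summing over $|v|=q-k$ then yields
\[
\Omega^{q-k}_f(I_w)\ge c\,a^k\,\Omega^{q-k}_f(I)-\beta' N^{-k}(aN)^k.
\]
Since $aN>1$, the error term is of order $a^k$, so it is dominated as soon as $\Omega^{q-k}_f(I)\to\infty$. That holds because $D_*>1$; this is guaranteed by $a<N^{1-D_*}$ together with $aN>1$, where I would first check that $D_*>1$ in this regime.

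Next comes the choice of scales. In the upper bound the cylinder size $N^{-k}$ was matched to $R$. Here, instead, the cylinder graph $F_w(\Gamma_G)$ has width $N^{-k}$ and height $\approx a^k$, which is much larger than $N^{-k}$. I will therefore take $R\approx a^k=N^{-\gamma k}$, so that $B(x,R)$ with $x\in F_w(\Gamma_G)$ contains the whole cylinder graph, and $r=R^{1/\theta}=N^{-q}$ with $q=\gamma k/\theta$. The requirement $q>k$ is exactly $\theta<\gamma$, which explains the range of $\theta$ in the statement. Lemma \ref{box} (with the covering lemma following it), applied to the piece over $I_w$, gives
\[
\mathcal N_r(B(x,R)\cap\Gamma_G)\gtrsim N^{q-k}\,\Omega^{q-k}_f(I_w)\gtrsim N^{q}\,a^k\,\Omega^{q-k}_f(I)\,N^{-k}\cdot N^{k},
\]
after rescaling the horizontal direction. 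Using the lower box bound $\Omega^{m}_f(I)\gtrsim N^{m(D_*-1-\epsilon)}$, again from Lemma \ref{box}, the count becomes
\[
N^{q}\,a^k\,N^{(q-k)(D_*-1)}.
\]

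The remaining step is to solve for the exponent $s$ in $N^{q}a^kN^{(q-k)(D_*-1)}=(R/r)^s$, where $R/r=N^{q-\gamma k}$. Writing $q=\gamma k/\theta$ and substituting $a^k=N^{-\gamma k}$ gives
\[
s=\frac{q-\gamma k+(q-k)(D_*-1)}{q-\gamma k}=D_*+\frac{(1-\gamma)(D_*-1)}{\gamma/\theta-\gamma}\cdot(\ldots).
\]
This should simplify to the stated formula $D_*+\frac{\theta}{1-\theta}\frac{1-\gamma}{\gamma}(\gamma+1-D_*)$. If my naive count does not land exactly on it, I would adjust the choice of $R$, for example between $N^{-k}$ and $a^k$, or account for the vertical thickness, that is, squares counted as $\max(\text{height}/r,1)$ per column, to match. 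The upper hypothesis $a<N^{1-D_*}$, i.e. $\gamma>D_*-1$, makes the gain positive.

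The main obstacle is the first step: obtaining a genuine lower bound $\Omega^{q-k}_f(I_w)\gtrsim a^k\,\Omega^{q-k}_f(I)$ uniformly. This requires both that the composed scaling along the chosen word stays comparable to $a^k$ (handled via Lipschitz continuity and a fixed point at a maximiser of $\alpha$) and that the subtracted error $\beta$-terms do not swamp the main term. For the latter I would rely on $aN>1$ and the growth of $\Omega^{m}_f(I)$.
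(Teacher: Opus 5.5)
\textbf{Main gap: one cylinder per ball cannot reach the stated exponent.} This is not something a different choice of $R$ or a column-by-column count will repair. If you carry your own computation through with $q=\gamma k/\theta$, you get
\[
s=1+(D_*-1)\frac{\gamma-\theta}{\gamma(1-\theta)}=D_*-\frac{\theta}{1-\theta}\cdot\frac{(1-\gamma)(D_*-1)}{\gamma}\le D_*.
\]
This is no better than the trivial bound $\dim_A^\theta\Gamma_G\ge\overline{\dim}_B\Gamma_G\ge D_*$, and it falls short of the stated exponent by exactly $\frac{\theta}{1-\theta}(1-\gamma)$.

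Your proposed fixes cannot make up this shortfall. The vertical thickness in each column is already what $N^q\Omega^{q-k}_f(I_w)$ measures. By \eqref{eq:oscillation-cylinder}, one graph piece $F_w(\Gamma_G)$ never needs more than about $N^qa^k\Omega^{q-k}_f(I)$ squares of side $r$. So in your setup the exponent essentially cannot exceed the value above with $D^*$ in place of $D_*$. Taking $R$ larger than $a^k$ (with $r=R^{1/\theta}$) only lowers the exponent. Taking $R<a^k$ means the ball no longer contains $F_w(\Gamma_G)$, so you are back to asking how many smaller pieces it does contain.

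\textbf{What the paper does, and further problems.} The missing ingredient is multiplicity. The paper takes $\alpha(x_{\max})=a$, $R\approx a^k$ and $N^{-k'}\approx R$ (so $k'\approx\gamma k$). It then asserts that at least $Ka^{k'-k}$ words $|w|=k$ satisfy $F_w(\Gamma_G)\subset B(x_{\max},R)$, each contributing your single-cylinder count. Since $a^{k'-k}=N^{\gamma(1-\gamma)k}\approx(R/r)^{\frac{\theta}{1-\theta}(1-\gamma)}$, this factor is precisely your shortfall. Be aware that this count is where the real difficulty lies, and the paper gives no argument for it. Heuristically, the graph over a horizontal window of width $R$ has height of order $a^{k'}\gg R$. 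Its $N^{k-k'}$ level-$k$ pieces are therefore spread over about $a^{k'-k}$ vertical windows of size $R$, i.e.\ about $(aN)^{k-k'}$ per window on average. That is less than $a^{k'-k}$ when $a^2N<1$.

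Two further points in your plan also fail.
\begin{enumerate}
\item $D_*>1$ does not follow from $N^{-1}<a<N^{1-D_*}$. For collinear data, $h=b$ and $f=b$ is linear for every $\alpha$. Then $\dim_A^\theta\Gamma_G=1$, while the claimed bound is $1+\frac{\theta}{1-\theta}(1-\gamma)>1$. A non-degeneracy assumption such as $D_*>1$ has to be added, and only then is your error term negligible.
\item Iterating Lemma \ref{Osc} evaluates $\alpha$ on the shifted cylinders $I_{w_j\cdots w_kv}$, $j=1,\dots,k$. So a fixed point of some $L_i$ at distance $\delta$ from $x^*$ only gives the rate $a-\lambda_\alpha\delta$ with $\delta$ fixed; there are only $N$ such points, $(i-1)/(N-1)$. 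Periodic points of compositions $L_u$ give a geometric mean of $\alpha$ over a whole orbit of $x\mapsto Nx \bmod 1$, which can be strictly smaller than $a$. This problem disappears for constant $\alpha$. The paper's use of $a'=a-\lambda_\alpha N^{-k'}$ at every step does not resolve it either.
\end{enumerate}
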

   \begin{proof}
    Let $a=||\alpha||_{\infty}$. Consider any $R>0$ and let $k,q\in \mathbb{N}$ such that $a^{k+1}\leq R\leq a^{k},$
   	and
   	\[\frac{1}{N^{q+1}}\leq R^{1/\theta}\leq \frac{1}{N^q}.\]
   	Also consider $k'\in \mathbb{N}$ such that \[ N^{-k-1}\leq R\leq N^{-k'}.\]
    Let $x_{\max}\in I$ be the point such that $\alpha(x_{\max})=a$. Let $\omega$ be the word of length $k$ such that $x_{\max}\in L_\omega(I)$. The number of words $w$ such that $\|w|=k$ and   $F_{w}\subset B(x_{\max},R)\cap \Gamma_G$ is atleast $Ka^{k'-k}$ for some constant $K>0$. Consider any such word $w$, then
   	\[
   	\mathcal{N}_{R^{1/\theta}}(F_w(\Gamma_G))
   	\geq
   	CN^q\bigl(\Omega^{q}_f(I_w)\bigr),
   	\] 
   	for some $C>0$. By definition,
   	\[
   	\Omega^{q-k}_f(I_w)
   	=
   	\sum_{|v|=q-k} \Omega_f(I_{wv}).
   	\]
   	For the argument to be valid, we require $q>k$. Simplifying this, we get
   	\[
   	q>k\implies \frac{1}{\theta}\frac{\log 1/R}{\log N}> \frac{\log 1/R}{\log 1/a}\implies \theta<\frac{\log\frac{1}{a}}{\log N}.
   	\]
   	Hence, the permissible range for $\theta$ lies in $\left(0,\frac{\log\frac{1}{a}}{\log N}\right)$. Choose an arbitrary $t\in L_w'(I)$ where $w'$ is the word of length $k'$ and $x_{max}\in L_w'(I) $. Since $\alpha$ is Lipschitz, we can write
   	$$\alpha(t)\geq a-\frac{\lambda_{\alpha}}{N^{k'}}=:a'$$
      	Applying the oscillation estimate recursively \(k\) times to each interval \(I_{wv}\) for any $t\in L_wv(I)$, we obtain
   	\begin{align*}
   		\Omega_f(I_{wv})
   		&\geq a'\Omega(f,I_{w'v})- \beta N^{-q}\\
   		&\vdots\\
   		&\geq a'^k \Omega(f,I_v)
   		-
   		\beta\sum_{r=0}^{k-1}a^rN^{-(q-r)}.
   	\end{align*}
   	Summing over all words \(v\) of length \(q-k\), we have
   	\begin{align*}
   		\Omega^{q-k}_f(I_w)
   		&=
   		\sum_{|v|=q-k}\Omega_(I_{wv})\\
   		&\geq
   		a'^k\sum_{|v|=q-k}\Omega_f(I_v)
   		-
   		\beta N^{q-k}\sum_{r=0}^{k-1}a'^rN^{-(q-r)}\\
   		&=
   		a'^k \Omega^{q-k}_f(I)
   		-
   		\beta N^{-k}\sum_{r=0}^{k-1}(a'N)^r.
   	\end{align*}
   Evaluating the error term for small enough $R$, we get
   	\[
   	\sum_{r=0}^{k-1}(aN)^r
   	=
   	\frac{(aN)^k-1}{aN-1},
   	\]
   	 and hence the inequality becomes
   	\[
   	\Omega^{q-k}_f(I_w)\geq a'^k \Omega^{q-k}_f(I)
   	-
   	\beta_1 (a'^k-N^{-k}),
   	\]
   	where $\beta_1=\frac{\beta}{a'N-1}.$
   	Replacing the oscillation with box-counting, we get
   	\[
   	\Omega^{q-k}_f(I)
   	\geq
   	N^{k-q}
   	\mathcal{N}_{N^{k}R^{1/\theta}}(G)-1,
   	\]
   	along with
   	$
   	\mathcal{N}_{\varepsilon}(\Gamma_G)
   	\geq
   	C_b\varepsilon^{-D_*},
   	$
   	gives
   	\[
   	\Omega^{q-k}_f(I)
   	\geq
   	C_b\,N^{k-q}
   	\bigl(N^{k}R^{1/\theta}\bigr)^{-D_*}-1.
   	\]
   	\begin{align*}
   		\mathcal{N}_{R^{1/\theta}}(B(x_{\max},R)\cap \Gamma_G)
   		&\geq
   		C_K a^{k'-k}\left(a-\frac{\lambda_\alpha}{N^{-k'}}\right)^{k} 
   		N^{k-kD_*}
   		R^{-D_*/\theta}-\beta_1N^q(a'^k)+\beta_1 N^{q-k}\\
   		&\geq
   			C_K a^{k'}\,N^{k-kD_*}
   		R^{-D_*/\theta}- \lambda_\alpha C_K a^{k'-k}ka^{k-1}N^{-k'}N^{k-kD_*}
   		R^{-D/\theta}\\
   		 &-\beta_1N^q(a^k)+\beta_1 N^{q-k}.
   	\end{align*}
   	Define $ \gamma=\frac{\log\frac{1}{a}}{\log N}$.
   	Since $a^{k'}\asymp R^{\gamma}$, we can write the main term as follows
   	\[
    a^{k'}\,N^{k-kD_*}R^{-D_*/\theta}= K_1 R^\gamma R^{\frac{1}{\gamma}(D_*-1)} R^{-D_*/\theta}= K_1\left(\frac{R}{R^{1/\theta}}\right)^{A_L},
   	\]
   	where $A_L = D_* + \frac{\theta}{1-\theta}\frac{1-\gamma}{\gamma}\left(\gamma+1-D_*\right)$. Next, we consider the second term of the inequality
   	 	\[
   	a^{k'-k}ka^{k-1}N^{-k'}N^{k-kD_*}
   	R^{-D_*/\theta}= K_2 R^{-\varepsilon}  R^\gamma  R R^{\frac{1}{\gamma}(D_*-1)} R^{-D_*/\theta}=K_1\left(\frac{R}{R^{1/\theta}}\right)^{B_L},
   	\]
   	where $B_L = D_* + \frac{\theta}{1-\theta}\left[\frac{1-\gamma}{\gamma}\left(\gamma+1-D_*\right)-1+\varepsilon\right].$ Since $\varepsilon\to 0$ as $k\to \infty,$ we will have $B_L<A_L.$ Moving on to the third term, we have
   \[
   N^qa'^{k}\leq N^qa^k=K_3\left(\frac{R}{R^{1/\theta}}\right).
   \]
   	Since $aN>N^D_*$, we have $A_L>1$. The fourth term is dominated by the third term because  $N^qa^k\geq N^{q-k}$. Therefore we have
   	\[
   	\mathcal{N}_{R^{1/\theta}}(B(x_{\max},R) \cap \Gamma_G)\geq C_3\bigg(\frac{R}{R^{1/\theta}}\bigg)^{A_L}
   	\]
   	for $C_3=K_1C_b>0$. Thus we have $
   	\dim_A^{\theta}(\Gamma_G)\geq A_L.
   	$
   	
   \end{proof}
Thus we have established non-trivial numerical bounds on the Assouad spectrum of such functions which are related to the parameters of the scaling function. We combine this results into a corollary when the upper and  ower box dimension coincide:

\begin{corollary}\label{cor}
	Let $\Gamma_G$ be the graph of the  generalized fractal interpolation function defined in Equation \eqref{2.6}.
   	Let $D_*, D^*$ be the lower and upper box dimensions of $\Gamma_G$ and assume that $D^*=D_*$ and denote the box dimension by $D$. If $||\alpha||_\infty<N^{1-D},$ then for $\theta\in(0,\gamma)$ we have
	\[
	D+ \frac{\theta}{1-\theta}\cdot\frac{1-\gamma}{\gamma}\left(\gamma+1-D\right)\leq \dim^\theta_{A}(\Gamma_G)\leq D +\frac{\theta}{1-\theta}\left(1-\gamma\right),
	\]
	where $ \gamma=\dfrac{\log\frac{1}{a}}{\log N}$.
\end{corollary}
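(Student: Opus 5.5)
The corollary should follow by combining Theorem \ref{Upperbound} (part (2)) with Theorem \ref{Lowerbound} under the assumption $D^*=D_*=D$. Throughout, write $a=\|\alpha\|_\infty$ and $\gamma=\log_N(1/a)$.

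\textbf{Setup.} Since $\alpha$ maps into $(0,1)$, we have $a<1$, so $\gamma>0$.

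\textbf{Checking the hypotheses of Theorem \ref{Lowerbound}.} I must first verify $N^{-1}<a<N^{1-D}$. The upper inequality is exactly the hypothesis of the corollary. For the lower inequality there are two cases.
\begin{itemize}
\item If $D>1$, then $N^{1-D}<1$, and $a>N^{-1}$ should be extracted as follows. If $aN\le 1$, Lemma \ref{OscLemma} gives $D=1$, a contradiction. So $aN>1$.
\item If $D=1$, the hypothesis reads $a<1$, which is automatic. Here I expect to need $aN>1$ as an implicit standing assumption, because for $aN\le1$ the interval $(0,\gamma)$ has $\gamma\ge1$ and part (1) of Theorem \ref{Upperbound} already gives $\dim_A^\theta=1$. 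I would treat this degenerate case separately.
\end{itemize}
With $N^{-1}<a<N^{1-D}$ in hand, we get $0<\gamma<1$ and $\gamma>D-1$. Theorem \ref{Lowerbound} then gives, for all $\theta\in(0,\gamma)$,
\[
\dim_A^\theta(\Gamma_G)\ \ge\ D+\frac{\theta}{1-\theta}\cdot\frac{1-\gamma}{\gamma}\,(\gamma+1-D).
\]

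\textbf{Upper bound.} Since $aN>1$, part (2) of Theorem \ref{Upperbound} applies. Observe that
\[
1+\frac{\log a}{\log N}=1-\gamma,
\]
so it yields
\[
\dim_A^\theta(\Gamma_G)\le \min\Bigl\{2,\ D+\frac{\theta}{1-\theta}(1-\gamma)\Bigr\}\le D+\frac{\theta}{1-\theta}(1-\gamma).
\]
Combining the two bounds gives the stated sandwich on $(0,\gamma)$.

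\textbf{Consistency check.} As a sanity check, the two bounds should be ordered correctly. This reduces to
\[
\frac{\gamma+1-D}{\gamma}\le 1,
\]
which is equivalent to $D\ge1$, and that always holds.

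\textbf{Main obstacle.} The only delicate point is the boundary case $D=1$ with $aN\le1$, i.e. ensuring $a>N^{-1}$ so that Theorem \ref{Lowerbound} is applicable. I would handle it as follows: either note that the statement is only meaningful when $\gamma<1$, or, if $\gamma\ge1$, invoke part (1) of Theorem \ref{Upperbound}. In that case both sides collapse to values at least $1$, and the lower bound must be checked directly. I would try to show the lower bound never exceeds $1$ there, since $(1-\gamma)\le0$ and $\gamma+1-D=\gamma>0$, so the lower expression is at most $D=1$ and the inequality holds trivially.
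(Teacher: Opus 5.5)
Your main line is what the paper intends. The corollary is stated without a separate proof, as the conjunction of Theorem \ref{Upperbound}(2) and Theorem \ref{Lowerbound} with $D^*=D_*=D$, using $1+\log\|\alpha\|_\infty/\log N=1-\gamma$. Your use of Lemma \ref{OscLemma} to force $\|\alpha\|_\infty N>1$ when $D>1$ is correct, and so is your ordering check.

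The flaw is your fallback for the degenerate case. Write $a=\|\alpha\|_\infty$ and take $D=1$, $aN<1$. Then $\gamma>1$, so for every $\theta\in(0,1)\subset(0,\gamma)$ the claimed upper bound satisfies
\[
D+\frac{\theta}{1-\theta}(1-\gamma)=1-\frac{\theta}{1-\theta}(\gamma-1)<1\le\overline{\dim}_B\Gamma_G\le\dim_A^\theta(\Gamma_G).
\]
Indeed Theorem \ref{Upperbound}(1) gives $\dim_A^\theta(\Gamma_G)=1$ there. So your assertion that ``both sides collapse to values at least $1$'' is false. You verified only the lower inequality, and the upper inequality of the corollary actually fails in this case. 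It cannot be proved, only excluded. The corollary's hypothesis $a<N^{1-D}$ does not exclude it, because $N^{1-D}=1$ when $D=1$.

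The correct resolution is your first option, made explicit. The corollary carries the implicit hypothesis $N^{-1}<\|\alpha\|_\infty$ (equivalently $\gamma<1$), inherited from Theorem \ref{Lowerbound} and from part (2) of Theorem \ref{Upperbound}. At the boundary $aN=1$ both sides equal $1$ and Theorem \ref{Upperbound}(1) covers it. For $aN<1$ the statement as written is false. With that hypothesis added and the second option dropped, your argument is complete.
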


\begin{remark}
	We want to emphasize the fact that the proof for the lower bound works even when we consider the case $N||\alpha||_\infty\geq N^D$.  That is we have:
	\[
	N||\alpha||_\infty\geq N^D\implies \gamma<D-1,
	\]
	and hence:
	\[
	\dim^\theta_{A}(\Gamma_G)\geq D+ \frac{\theta}{1-\theta}\cdot\frac{1-\gamma}{\gamma}\left(\gamma+1-D\right)\leq D.
	\]
	But we already know that 
	\[
	\dim^\theta_{A}(\Gamma_G)\geq D,
	\]
	and hence we do not obtain new information when $N||\alpha||_\infty\geq N^D$.
\end{remark}

\section{Applications}\label{sec-ex}
In this section, we consider two classic examples of fractal functions, namely, Weierstrass and Takagi functions. The Hausdorff dimension and box-counting dimension of these functions have been extensively studied in \cite{Weiertrass1, Ruan1, Weierstrass, Takeo1996}.

In this article, for the first time, we produce general results regarding the Assouad dimension and Assouad spectrum of these fractal functions.
\subsection{Weierstrass functions}
Let $N\geq 2$ and $\frac{1}{N}<\lambda<1$, and let $$W_{\lambda,N}^\phi(x)=\sum_{k=0}^\infty\lambda^k\phi(N^kx),\quad x\in \mathbb{R}$$ denote the 
 Weierstrass function, where $\phi:\mathbb{R}\to \mathbb{R}$ is a $\mathbb{Z}$-periodic real analytic function. It is shown that such a function is either real analytic, or the Hausdorff dimension of its graph $\Gamma_W$ is $2+\log_N\lambda$ (\cite{RenShen1}). Let $\phi(x)=\cos(2\pi x)$. This gives us the classical Weierstrass function with equal Hausdorff and box-dimension as $2+\log_N\lambda$.
Consider $f=W_{\lambda,N}^\phi|_{[0,1]}$. We can represent $f$ as a generalized affine fractal interpolation function since for $1\leq n\leq N$
\[
f\bigg(\frac{x+n-1}{N}\bigg)=\cos\bigg(\frac{2\pi (x+n-1)}{N}\bigg)+ \lambda\sum_{k=0}^\infty\lambda^k\cos(2\pi N^kx)=\cos\bigg(\frac{2\pi (x+n-1)}{N}\bigg)+ \lambda f(x),
\]
where $\lambda$ becomes the vertical scaling function and hence $||\alpha||_{\infty}=\lambda$. In view of Theorem \ref{Upperbound} and Theorem \ref{Lowerbound}, we prove: 
\begin{theorem}\label{Weier} 
	Let $$W_{\lambda,N}(x)=\sum_{k=0}^\infty\lambda^k\cos(2\pi N^kx),\quad x\in \mathbb{R}$$
	denote the Weierstrass function. Consider $f=W_{\lambda,N}|_{[0,1]}$ and denote the graph of $f$ by $\Gamma_W$. Let  $\gamma=\log_N\frac{1}{\lambda}$. Then:
\begin{enumerate}
		\item \[
		\dim_A^\theta(\Gamma_W)\leq
		\frac{2-\gamma-\theta}{1-\theta}, \quad \theta\in(0,\gamma).\]
		\item  If $\lambda^2N<1$, then we obtain a lower bound on the spectrum given by
		\[
		\dim_A^\theta(\Gamma_W)\geq 2 -\gamma+ \frac{\theta}{1-\theta}\frac{1-\gamma }{\gamma}\left(2\gamma-1\right), \quad \theta\in(0,\gamma).\]
		\item If $\lambda^2N<1$, then the Assouad dimension is bounded below by
		\[
		\dim_A(\Gamma_W)\geq 1 + \gamma.
		\]
\end{enumerate}
\end{theorem}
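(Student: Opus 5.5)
The plan is to realize $f=W_{\lambda,N}|_{[0,1]}$ as a generalized affine FIF and then substitute into Theorem \ref{Upperbound}, Theorem \ref{Lowerbound} and Corollary \ref{cor}. The functional equation displayed just before the statement shows that $f$ satisfies \eqref{2.6} with uniform partition $x_i=i/N$ and constant scaling function $\alpha\equiv\lambda$. A constant is trivially Lipschitz ($\lambda_\alpha=0$), and $\|\alpha\|_\infty=\lambda$. The non-affine term $\cos(2\pi(x+n-1)/N)$ plays the role of $h\circ L_i$ plus $\alpha b$. It is smooth, so Lemma \ref{Osc} still holds with a constant $\beta$ built from its Lipschitz constant; I would state this small adaptation explicitly.

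We use the known fact that $D_*=D^*=D=2+\log_N\lambda=2-\gamma$. Since $\lambda>1/N$, we have $N\|\alpha\|_\infty>1$, so case (2) of Theorem \ref{Upperbound} applies. Note that
\[1+\frac{\log\lambda}{\log N}=1-\gamma.\]
Hence for $\theta\in(0,\gamma)$,
\[\dim_A^\theta(\Gamma_W)\le 2-\gamma+\frac{\theta}{1-\theta}(1-\gamma)=\frac{(2-\gamma)(1-\theta)+\theta(1-\gamma)}{1-\theta}=\frac{2-\gamma-\theta}{1-\theta}.\]
This gives (1). Taking the minimum with $2$ is harmless, because the right-hand side is below $2$ exactly when $\theta<\gamma$.

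For (2), we need the hypothesis $N^{-1}<\lambda<N^{1-D}$ of Theorem \ref{Lowerbound}. The first inequality is given. For the second, $N^{1-D}=N^{\gamma-1}=1/(N\lambda)$, so $\lambda<N^{1-D}$ is equivalent to $\lambda^2N<1$, which is exactly the extra assumption. Substituting $D_*=2-\gamma$ gives $\gamma+1-D_*=2\gamma-1$, and the lower bound becomes
\[2-\gamma+\frac{\theta}{1-\theta}\cdot\frac{1-\gamma}{\gamma}(2\gamma-1),\qquad \theta\in(0,\gamma).\]
Note that $\lambda^2N<1$ means $\gamma>1/2$, so this bound is nontrivial.

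For (3), we use $\dim_A\ge\dim_{qA}\ge\dim_A^\theta$ for every $\theta$ and let $\theta\uparrow\gamma$ in (2). Then $\theta/(1-\theta)\to\gamma/(1-\gamma)$, and the bound tends to
\[2-\gamma+(2\gamma-1)=1+\gamma.\]
The spectrum is defined only for a fixed $\theta$, and $\dim_A\ge\dim_A^\theta$ holds directly from the definitions (every Assouad exponent works for $r=R^{1/\theta}$). So taking the supremum over $\theta<\gamma$ gives $\dim_A(\Gamma_W)\ge1+\gamma$.

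The main obstacle is verifying that the Weierstrass function genuinely fits the framework of Theorems \ref{Upperbound} and \ref{Lowerbound}. Those proofs assume a piecewise-linear $h$ and linear $b$ only through the constant $\beta$ in Lemma \ref{Osc}. I would re-derive Lemma \ref{Osc} with $|h(x_1)-h(x_2)|$ replaced by $\frac{2\pi}{N}|x_1-x_2|$, check that nothing else uses linearity, and then import the equality of box dimensions from the literature (\cite{HuLau}, \cite{Weierstrass}).
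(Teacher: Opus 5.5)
Your proposal is correct and follows the paper's own argument. Parts (1) and (2) come from substituting $\|\alpha\|_\infty=\lambda$ and $D_*=D^*=2-\gamma$ into Theorem \ref{Upperbound} and Theorem \ref{Lowerbound}, and part (3) comes from letting $\theta\uparrow\gamma$ in (2) and using $\dim_A\ge\dim_A^\theta$. Your explicit checks fill in details the paper passes over silently: that $\lambda<N^{1-D}$ is exactly $\lambda^2N<1$, that the upper bound stays below $2$ precisely for $\theta<\gamma$, and that the non-piecewise-linear term $\cos(2\pi L_n(x))$ enters Lemma \ref{Osc} only through a Lipschitz constant. One harmless slip: that constant is $2\pi$ rather than $2\pi/N$ when $x_1,x_2\in Q$.
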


\begin{proof}
	(1) follows from Theorem \ref{Upperbound} and (2) follows from Theorem \ref{Lowerbound}. To see (3), note that 
		\begin{align*}
		\dim_A(\Gamma_W)
		&\geq\lim_{\theta\to \log_N(\frac{1}{\lambda})}\dim_A^\theta(\Gamma_W)\\
		&
		\geq2 -\log_N 1/\lambda+ 2\log_N 1/\lambda-1= 1+\log_N 1/\lambda.
		\end{align*}
\end{proof} In the following example, we derive exact estimates for the Assouad spectrum and Assouad dimension of the classical Weierstrass function.
\begin{example}\label{ex-5.1}
	Let $W_{\lambda,N}^\phi$ denote the classical Weierstrass function where $\phi(x)=\cos(2\pi x)$. Let $\lambda=0.6$ and $N=3.$ Then $\log_N\frac{1}{\lambda}=0.630>1/2\ldots$,
    and hence we have
	\[
\dim_A^\theta(\Gamma_W)\leq\begin{cases}
	\frac{1.369-\theta}{1-\theta}, \qquad &\theta\in(0,0.630\ldots) \\
	2, &\theta\in[0.630\ldots,1)
\end{cases}
\]
and for $\theta\in(0,0.630\ldots)$
\[
\dim_A^\theta(\Gamma_W)\geq 1.369 + \frac{\theta}{1-\theta}\left(\frac{0.369}{0.630}\right){0.261}.
\]
As a result, $\dim_A(\Gamma_W)\geq 1.630\ldots$.

\begin{figure}[H]
	\centering
	
	\begin{minipage}{0.48\textwidth}
		\centering
	\begin{tikzpicture}
		\begin{axis}[
			width=\textwidth,
			height=6cm,
			xlabel={$x$},
			ylabel={$W_{\lambda,N}^{\phi}(x)$},
			grid=major,
			enlarge x limits=false,
			enlarge y limits=false,
			line join=round,
			]
			
			\addplot[
			thick,
			blue
			]
			table[x index=0,y index=1] {weierstrass1.dat};
			
		\end{axis}
	\end{tikzpicture}
	\caption{
		Classical Weierstrass function with
		$\lambda=\frac12$,
		$N=3$.
	}
	\end{minipage}
	\hfill
	\begin{minipage}{0.48\textwidth}
		\centering
		\begin{tikzpicture}
		\begin{axis}[
			width=\textwidth,
			height=6cm,
			axis lines=middle,
			xlabel={$\theta$},
			ylabel={$\dim_A^\theta(G)$},
			xmin=0, xmax=1,
			ymin=1, ymax=2.2,
			samples=200,
			clip=true,
			]
			
			\addplot[darkgray, thick, domain=0.0001:0.630] 
			{(1.369-x)/(1 - x)};
			\addplot[lightgray, thick, domain=0.0001:0.630] 
			{(1.369-x*1.217)/(1 - x)};
			\addplot[darkgray, thick, domain=0.630:1] 
			{2};
			\addplot[dashed, thin ,domain=1:2] ({0.630}, x);
		\end{axis}
	\end{tikzpicture}
	\caption{Upper and lower bounds of the Assouad spectrum of the Weierstrass function}
	\end{minipage}
	
\end{figure}

\end{example}
\begin{remark}
	In a recent work, Chrontsios and Tyson \cite{Tyson2025} deals with the Assouad spectrum of H\"older graphs and mentions the upper bound of the regularized Assouad spectrum for a function $f:I\to\mathbb{R}$ (where $I\subset\mathbb{R}$ is a closed interval) as:
	\[
	\dim^{\theta}_{A,reg}(\Gamma_f)
	\leq
	 \frac{2-\alpha-\theta}{1-\theta},\quad \theta\in(0,\alpha)
	\] 
	
 We also note that the Weierstrass function is an  $\alpha$-H\"older function where $\alpha=-\log_N\lambda$. Our findings are consistent with those of this work, as our methodology yields the same upper bound. In addition, we obtain a non-trivial lower bound.
\end{remark}
\subsection{Takagi functions}
Let $\phi(x)=\text{Dist}(x,\mathbb{Z)}$, where
\[
\text{Dist}(x,\mathbb{Z)}=\begin{cases}
	\{x\},& 0\leq\{x\}<\frac{1}{2}\\
	1-\{x\},&\frac{1}{2}\leq\{x\}\leq1.
\end{cases}
\]
Here $\{x\}$ denotes the fractional part of $x$. Then $T_{\lambda,N}^\phi$ gives us the generalized Takagi function. It is known that the box dimension of such functions is $2+\log_N\lambda$ when $\lambda N>1$ and $1$ when $\lambda N\leq 1$ (see \cite{Weiertrass1}).
\par
Let $g=T_{\lambda,N}^\phi|_{[0,1]}$ and denote the graph of $g$ by $\Gamma_T$. Similar to the Weierstrass case, we can write the functional equation of $g$ as \[
g\bigg(\frac{x+n-1}{N}\bigg)=\text{Dist}\bigg(\frac{ x+n-1}{N},\mathbb{Z}\bigg)+ \lambda\sum_{k=0}^\infty\lambda^k\text{Dist}(N^kx,\mathbb{Z})=\text{Dist}\bigg(\frac{ x+n-1}{N},\mathbb{Z}\bigg)+ \lambda g(x).
\]
As before, we get
\[
\dim_A^\theta(\Gamma_T)\leq\begin{cases}
		h(\theta), & \lambda N>1\\
		1, & \lambda N\leq1,
\end{cases}
\]
where
\[
h(\theta)=\begin{cases}
	\frac{2+\log_N \lambda - \theta}{1-\theta}, & \theta\in(0,\log_N(\frac{1}{\lambda}))\\
	2, & \theta\in[\log_N(\frac{1}{\lambda}),1).
\end{cases}
\]
Also, when $\lambda^2N<1$
\[
\dim_A^\theta(\Gamma_T)\geq 2 -\log_N 1/\lambda+ \frac{\theta}{1-\theta}\frac{1-\log_N 1/\lambda }{\log_N 1/\lambda}\left(2\log_N 1/\lambda-1\right).
\] 
So we a have similar result for Takagi function and the proof is almost identical to the proof of Theorem \ref{Weier}.
\begin{theorem}\label{Takag}
	Let \[T_{\lambda,N}(x)=\sum_{k=0}^\infty\lambda^k\textrm{Dist}(N^kx,\mathbb{Z}),\; x\in\mathbb{R}\]
	denote the general Takagi function. Consider $g=T_{\lambda,N}|_{[0,1]}$ and denote the graph of $g$ by $\Gamma_T$. Let  $\gamma=\log_N\frac{1}{\lambda}$, then:
	\begin{enumerate}
		\item \[
		\dim_A^\theta(\Gamma_T)\leq
		\frac{2-\gamma-\theta}{1-\theta},\quad \theta\in(0,\gamma).
		\] 
		\item  If $\lambda^2N<1$, then we obtain a lower bound on the spectrum given by
		\[
		\dim_A^\theta(\Gamma_T)\geq 2 -\gamma+ \frac{\theta}{1-\theta}\frac{1-\gamma }{\gamma}\left(2\gamma-1\right),  \quad \theta\in(0,\gamma).
		\] 
		\item If $\lambda^2N<1$, then the Assouad dimension is bounded below by
		\[
		\dim_A(\Gamma_f)\geq 1 + \gamma.
		\]
	\end{enumerate}
\end{theorem}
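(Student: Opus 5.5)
The plan is to mirror the proof of Theorem \ref{Weier}: first realize $g$ as a generalized affine FIF, then apply Theorem \ref{Upperbound} and Theorem \ref{Lowerbound}, and finally let $\theta\to\gamma$. By the functional equation displayed before the statement, $g\big(\frac{x+n-1}{N}\big)=\text{Dist}\big(\frac{x+n-1}{N},\mathbb{Z}\big)+\lambda g(x)$ for $x\in[0,1]$ and $n\in\mathbb{N}_N$. So $g$ satisfies \eqref{2.6} with uniform partition $x_n=n/N$, maps $L_n(x)=(x+n-1)/N$, and constant scaling function $\alpha\equiv\lambda\in(0,1)$, which is trivially Lipschitz. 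The nonlinear part $\text{Dist}(L_n(x),\mathbb{Z})-\lambda b(x)$ is not piecewise linear on one cell in the strict sense. However, in Lemma \ref{Osc} the only property of $h$ used is that it is Lipschitz, which gives the $\beta|Q|$ error term, and $\text{Dist}(\cdot,\mathbb{Z})$ is $1$-Lipschitz. So Lemma \ref{Osc}, and with it all estimates of Section \ref{sec-main}, carry over unchanged. I would record this as a short remark before invoking the theorems. Here $\|\alpha\|_\infty=\lambda$ and $\gamma=\log_N(1/\lambda)$.

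For (1), I will use the known fact quoted above that $\dim_B\Gamma_T=2+\log_N\lambda=2-\gamma$ when $\lambda N>1$. Then $D^*=2-\gamma$, and Theorem \ref{Upperbound}(2) gives
\[
\dim_A^\theta(\Gamma_T)\le 2-\gamma+\frac{\theta}{1-\theta}(1-\gamma)=\frac{2-\gamma-\theta}{1-\theta}.
\]
This expression is less than $2$ exactly when $\theta<\gamma$. If instead $\lambda N\le 1$, then $\gamma\ge1$, the interval $(0,\gamma)$ contains all of $(0,1)$, and Theorem \ref{Upperbound}(1) gives the value $1$. Since $\frac{2-\gamma-\theta}{1-\theta}\ge 1$ whenever $\gamma\le 1$, it remains to check that the displayed bound is still correct (or vacuous) in that regime.

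For (2), the hypothesis $\lambda^2N<1$ means $\gamma>1/2$. Together with $\lambda>1/N$ (needed so that $D_*=2-\gamma$), this gives $N^{-1}<\lambda<N^{\gamma-1}=N^{1-D_*}$. This inequality is equivalent to $\lambda<N^{-1/2}$, i.e. exactly $\lambda^2N<1$. Hence Theorem \ref{Lowerbound} applies, and substituting $D_*=2-\gamma$ gives $\gamma+1-D_*=2\gamma-1$. This is the stated bound for $\theta\in(0,\gamma)$.

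For (3), I use $\dim_A\ge\dim_A^\theta$ for every $\theta$ and let $\theta\uparrow\gamma$ in (2). The factor $\frac{\theta}{1-\theta}\cdot\frac{1-\gamma}{\gamma}$ tends to $1$, so the bound tends to $2-\gamma+2\gamma-1=1+\gamma$. Because $\gamma<1$ here, this limit is taken inside $(0,1)$.

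The main obstacle is the reliance on Theorem \ref{Lowerbound} for the Takagi setting. I need to verify that its proof uses only the Lipschitz bound from Lemma \ref{Osc} and the lower box-counting estimate $\mathcal N_\varepsilon(\Gamma_T)\ge C\varepsilon^{-D_*}$, and in particular that the box dimension of the Takagi graph really equals $2-\gamma$ for all integer $N\ge2$ with $\lambda N>1$. I would cite \cite{Weiertrass1} for the latter. If that citation only covers the upper bound, the fallback is to derive the lower box dimension directly from Lemma \ref{box}: for constant $\alpha$, iterating Lemma \ref{Osc} from below gives $\Omega^k_g(I)\gtrsim(\lambda N)^k$, provided the oscillations of $g$ on the level-one cells are bounded below. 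That non-degeneracy is plausible for the tent function but must be checked.
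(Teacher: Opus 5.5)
Your argument is the paper's argument. The paper proves this theorem only by saying it is ``almost identical'' to Theorem~\ref{Weier}: (1) comes from Theorem~\ref{Upperbound} with $D^*=2-\gamma$, (2) from Theorem~\ref{Lowerbound} with $D_*=2-\gamma$, and (3) by letting $\theta\uparrow\gamma$. Your bookkeeping is correct and more careful than the paper's. Only the Lipschitz property of $h=\mathrm{Dist}(\cdot,\mathbb{Z})$ enters Lemma~\ref{Osc}. Once $D_*=2-\gamma$, the condition $N^{-1}<\lambda<N^{1-D_*}$ is equivalent to $\lambda^2N<1$. The factor $\frac{\theta}{1-\theta}\cdot\frac{1-\gamma}{\gamma}$ tends to $1$. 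You can also get $D^*=D_*=2-\gamma$ inside the paper from Theorem~\ref{Ruan}: for $\alpha\equiv\lambda$, every row of $\overline{P}_k=\underline{P}_k$ has exactly $N$ entries equal to $\lambda$, so $\rho_\alpha=N\lambda$.

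One loose end must be closed differently from how you leave it. For $\lambda<1/N$ we have $\gamma>1$, and the right-hand side of (1) equals $1-\frac{\gamma-1}{1-\theta}<1\le\overline{\dim}_B\Gamma_T\le\dim_A^\theta(\Gamma_T)$ for every $\theta\in(0,1)$. So (1) is false there, not vacuous; your remark ``$\ge 1$ whenever $\gamma\le 1$'' only covers $\lambda=1/N$. The theorem has to be read under the standing assumption $\frac1N<\lambda<1$. At $\lambda=1/N$ it reduces to Theorem~\ref{Upperbound}(1).

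The genuine gap is in the step you call the main obstacle, and the verification you propose would not go through. The proof of Theorem~\ref{Lowerbound} does not use only Lemma~\ref{Osc} and the bound $\mathcal{N}_\varepsilon(\Gamma_G)\ge C_b\varepsilon^{-D_*}$. Its decisive input is the unproved assertion that at least $Ka^{k'-k}$ level-$k$ cylinders $F_w(\Gamma_G)$ lie in $B(x_{\max},R)$, where $a^{k+1}\le R\le a^k$ and $N^{-k'-1}\le R\le N^{-k'}$.

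The whole improvement of $A_L$ over the trivial bound $D_*$ comes from this factor, and it does not follow from oscillation or box-counting estimates. It says that a strip of height $\approx R$ meets $\gtrsim\lambda^{-(k-k')}$ of the $N^{k-k'}$ level-$k$ cylinders over a window of width $\approx N^{-k'}$. In other words, (sheared) horizontal slices of $\Gamma_T$ would have dimension at least $\gamma$, uniformly across scales.

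For $\alpha\equiv\lambda$ every point is an admissible $x_{\max}$, and at $x_{\max}=0$ (where $g(0)=0$) the assertion is false:
\begin{enumerate}
\item For $x\in[N^{-n-1},N^{-n}]$ the first $n$ terms of the series equal $\lambda^j N^jx$.
\item Hence $g(x)\ge(\lambda N)^{n-1}x\ge\lambda^{n-1}N^{-2}$.
\item Combined with $g(x)<R\le\lambda^k$, this forces $n>k+1-2/\gamma$.
\item So $B((0,0),R)\cap\Gamma_T$ lies over $[0,N^{2/\gamma-1}N^{-k})$, which meets only $O(1)$ level-$k$ intervals, not $\gtrsim a^{k'-k}\to\infty$ of them.
\end{enumerate}

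So (2) and (3) in your proposal rest, exactly as in the paper, on a step that is not established. To complete them you would have to exhibit points where the graph, inside a window of width $N^{-k'}$, meets a strip of height $\lambda^k$ in $\gtrsim\lambda^{-(k-k')}$ level-$k$ cylinders. That is a genuinely new ingredient, a lower bound on slice dimensions of the Takagi graph, which neither your proposal nor the paper supplies.
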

We next derive exact estimates for the Assouad spectrum and quasi-Assouad dimension of the classical Takagi function.
\begin{example}
 	Let $T_{\lambda,N}^\phi$ denote the classical Takagi function where $\lambda N = 1$, that is,
 	\[
 	T_{\lambda,N}^\phi=\sum_{k=0}^{\infty}\frac{1}{N^k}\phi(N^k x).	\]
 	By applying Theorem \ref{Upperbound}, we get 
 	$\dim_A^{\theta}(\Gamma_T)=1$
 	for $\theta\in(0,1)$ and hence the quasi-Assouad dimension of the classical Takagi function is also equal to $1.$ 
 	\begin{figure}[H]
 		\centering
 		\begin{tikzpicture}
 			\begin{axis}[
 				width=15cm,
 				height=8cm,
 				xlabel={$x$},
 				ylabel={$W_{\lambda,N}^{\phi}(x)$},
 				grid=major,
 				enlarge x limits=false,
 				enlarge y limits=false,
 				line join=round,
 				]
 				
 				\addplot[
 				thick,
 				blue
 				]
 				table[x index=0,y index=1] {takagipoints2.dat};
 			\end{axis}
 		\end{tikzpicture}
 				\caption{
 				Takagi function with $N=2$.
 			}
 \end{figure}
 			
\end{example}
The following example illustrates a non-trivial lower bound for the Assouad spectrum of the general Takagi function.
\begin{example}\label{ex-5.3}
	Let $T_{\lambda,N}^\phi$ denote the Takagi function where $\phi(x)=\text{Dist}(x,\mathbb{Z})$. Let $\lambda=0.6$ and $N=2$. Since $
	\log_N \frac{1}{\lambda} = 0.736\ldots>\frac{1}{2},
	$ the lower bound of the spectrum is non-trivial and we have
	\[
	\dim_A^\theta(\Gamma_g)\leq
	\begin{cases}
		\frac{1.274\ldots-\theta}{1-\theta}, & 0\leq\theta<0.736 \\[2mm]
		2, &0.736\leq\theta\leq1
	\end{cases} 
	\]
	and 
	\[\dim_A^\theta(\Gamma_f)\geq 1.274 + \frac{\theta}{1-\theta}\left(\frac{0.274}{0.736}\right)(0.472)
	\]
	for $\theta\in(0,0.736)$. 
	Therefore we have $
	\dim_A(\Gamma_f)\geq 1.746.
	$
	
	\begin{figure}[H]
		\centering
		
		\begin{minipage}{0.48\textwidth}
			\centering
			\begin{tikzpicture}
				\begin{axis}[
					width=\textwidth,
					height=6cm,
					xlabel={$x$},
					ylabel={$W_{\lambda,N}^{\phi}(x)$},
					grid=major,
					enlarge x limits=false,
					enlarge y limits=false,
					line join=round,
					]
					
					\addplot[
					thick,
					blue,
					each nth point=2
					]
					table[x index=0,y index=1] {takagipoints5.dat};
					
				\end{axis}
			\end{tikzpicture}
			\caption{
				General Takagi function with
				$\lambda=0.6$,
				$N=2$.
			}
		\end{minipage}
		\hfill
		\begin{minipage}{0.48\textwidth}
			\centering
			\begin{tikzpicture}
			\begin{axis}[
				width=\textwidth,
				height=6cm,
				axis lines=middle,
				xlabel={$\theta$},
				ylabel={$\dim_A^\theta(G)$},
				xmin=0, xmax=1,
				ymin=1, ymax=2.2,
				samples=200,
				clip=true,
				]
				
				\addplot[darkgray, thick, domain=0.0001:0.736] 
				{(1.264-x)/(1 - x)};
				
					\addplot[lightgray, thick, domain=0.0001:0.736] 
				{1.264 + (x/(1-x))*0.273 *0.472/0.736};
				\addplot[darkgray, thick, domain=0.736:1] 
				{2};
				\addplot[dashed, thin ,domain=1:2] ({0.736}, x);
				
			\end{axis}
		\end{tikzpicture}
		\caption{Bounds of the Assouad spectrum of the general Takagi function}
		\end{minipage}
		
	\end{figure}
	
\end{example}

\subsection{A general self-affine function} 
In 2025, Jiang and Ruan \cite{Ruan1} proved the following result regarding the box dimension of a generalized affine fractal interpolation function and the vertical scaling matrices.

\begin{theorem}\cite[Theorem 4.6]{Ruan1}\label{Ruan}
	Assume $\alpha$ is non-zero on $I$. Then \begin{equation*}
		\overline{\dim_B}\; \Gamma_f
		\leq
		\max \left\{
		1,
		1+\frac{\log \rho^{*}}{\log N}
		\right\},\; \, 
		\underline{{\dim_B}}\; \Gamma_f
		\geq
		\max \left\{
		1,
		1+\frac{\log \rho_{*}}{\log N}
		\right\}.
	\end{equation*}
\end{theorem}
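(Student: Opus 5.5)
The plan is to work entirely through the oscillation sums $\Omega^k_f(I)$ and Lemma \ref{box}. It then suffices to show two growth bounds. First, for every fixed level $m$ and every $\varepsilon>0$,
\[
\Omega^k_f(I)\le C_{m,\varepsilon}\,\max\{1,\rho(\overline P_m)+\varepsilon\}^k \cdot k .
\]
Second, when $\rho_*>1$,
\[
\Omega^k_f(I)\ge c_{m}\,\rho(\underline P_m)^k
\]
along all large $k$. Letting $k\to\infty$, then $\varepsilon\to0$ and $m\to\infty$, and using $\rho(\overline P_m)\downarrow\rho^*$ (Theorem 3.4 of \cite{Ruan2023}) together with the analogous limit $\rho(\underline P_m)\to\rho_*$, gives both inequalities. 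The $\max\{1,\cdot\}$ accounts for the trivial bounds $1\le \underline{\dim}_B\Gamma_f\le\overline{\dim}_B\Gamma_f$.

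\textbf{Upper bound.} Fix $m$. For $k\ge m$, collect the level-$k$ oscillations into a vector $u_k\in\mathbb R^{N^k}$ and group its entries according to their level-$m$ ancestor. Each level-$(k+1)$ interval is of the form $L_i(J)$ with $J$ a level-$k$ interval. Its image $L_i(J)$ lies inside some $I^m_{i,j}$, on which $|\alpha|\le\overline\alpha^m_{i,j}$. Lemma \ref{Osc} (with $q$ chosen where $|\alpha|$ is maximal on $L_i(J)$) then gives
\[
\Omega_f(L_i(J))\le \overline\alpha^m_{i,j}\,\Omega_f(J)+\beta N^{-k-1}.
\]
Define $U_k\in\mathbb R^{N^m}$ by summing $u_k$ over descendants of each level-$m$ interval. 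The block structure in \eqref{3.2} is exactly the bookkeeping of which level-$m$ ancestor $L_i(J)$ falls in. So this becomes the entrywise vector inequality
\[
U_{k+1}\le \overline P_m U_k+\beta N^{-k-1}N^{k}\mathbf 1 .
\]
Iterating from $k=m$ gives
\[
\Omega^k_f(I)=\mathbf 1^TU_k\le \|\overline P_m^{\,k-m}\|\,|U_m|+\beta'\sum_{r<k}\|\overline P_m^{\,r}\|,
\]
and Gelfand's formula $\|\overline P_m^{\,r}\|\le C_\varepsilon(\rho(\overline P_m)+\varepsilon)^r$ finishes this half.

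\textbf{Lower bound.} Here we may assume $\rho_*>1$; otherwise the claim is the trivial bound $1$. Run the same recursion with the reverse inequality from Lemma \ref{Osc}, now using $\underline\alpha^m_{i,j}$:
\[
U_{k+1}\ge \underline P_m U_k-\beta\mathbf 1 .
\]
Since $\alpha$ is nonzero, every $\underline\alpha^m_{i,j}>0$. The support of $\underline P_m$ is the de Bruijn-type shift graph on words of length $m$, which is irreducible (indeed primitive). So Perron--Frobenius provides a positive left eigenvector $y^T\underline P_m=\rho_m y^T$ with $\rho_m=\rho(\underline P_m)$. Pairing with $y$ gives the scalar recursion
\[
y^TU_{k+1}\ge \rho_m\,y^TU_k-\beta\,y^T\mathbf 1 ,
\]
hence
\[
y^TU_k\ge \rho_m^{\,k-K}\Bigl(y^TU_K-\tfrac{\beta\, y^T\mathbf 1}{\rho_m-1}\Bigr)
\]
for $k\ge K$, whenever $\rho_m>1$. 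Because $y>0$, the quantity $y^TU_k$ is comparable to $\Omega^k_f(I)$, which yields the claimed exponential lower bound.

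\textbf{Main obstacle.} The hard step is the lower bound: it needs some starting level $K$ at which $y^TU_K$ exceeds the threshold $\beta\,y^T\mathbf 1/(\rho_m-1)$. Otherwise the additive $\beta$ errors could swamp the growth, for instance if $f$ were merely Lipschitz. I would argue by contradiction. If $y^TU_k$ stays below the threshold for all $k$, then $\Omega^k_f(I)$ is bounded, so $f$ has bounded variation along the $N$-adic grid. Then I would use the functional equation \eqref{2.6} in the upper-bound direction from the earlier paragraph, but with $\underline P_m$ comparisons, to show that a bounded oscillation sum forces the spectral radius of the matrix governing growth to be at most $1$. This contradicts $\rho_m>1$ for $m$ large (since $\rho_m\to\rho_*>1$). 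If that route stalls, I would fall back on choosing $K$ large using $\Omega^K_f(I)\to\infty$. That divergence should follow from the same recursion applied to $f$ minus its piecewise-linear interpolant $h$, where the $\beta$-term coming from $h$ vanishes.
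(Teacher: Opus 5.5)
The paper gives no proof of this statement; it is imported from Jiang--Ruan \cite{Ruan1}. Your upper-bound half is sound and is the standard vertical-scaling-matrix argument. The entrywise domination $U_{k+1}\le \overline P_m U_k+c\,\mathbf 1$ holds because the block structure of $\overline P_m$ is exactly the ancestor bookkeeping. Gelfand's formula, Lemma \ref{box} and $\rho(\overline P_m)\downarrow\rho^*$ then finish it.

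\textbf{The lower bound.} This half has a genuine gap, exactly at the step you flag, and the claim you propose to prove there is false.

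\emph{A counterexample.} Take $\alpha\equiv a\in(1/N,1)$ and collinear data, say $y_0=\dots=y_N=0$. Then $b=h\equiv0$ and the FIF is $f\equiv 0$. So $\Omega^k_f(I)=0$ for all $k$ and $\underline{\dim}_B\Gamma_f=1$. Yet each row of $\underline P_m$ has exactly $N$ nonzero entries, all equal to $a$. Hence $\rho(\underline P_m)=Na>1$ for every $m$, and $\max\{1,1+\log\rho_*/\log N\}=2+\log_N a>1$. So bounded oscillation sums do not force the spectral radius below $1$. The lower bound as quoted cannot hold without a non-degeneracy hypothesis, e.g.\ that $\Gamma_f$ is not a line segment.

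\emph{The fallback also fails.} In this example $f-h\equiv0$, so nothing diverges. More generally, the additive error in Lemma \ref{Osc} is not due only to $h$. It contains $2B_f\lambda_\alpha|Q|$ from the variation of $\alpha$ across $Q$, plus a term from $b$. Indeed $(f-h)\circ L_i=(\alpha\circ L_i)\,[(f-h)+(h-b)]$ still carries the additive Lipschitz term $(\alpha\circ L_i)(h-b)$.

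\emph{What is missing.} You need an argument, under such a hypothesis, that $y^TU_K>\beta\, y^T\mathbf 1/(\rho_m-1)$ for some $K$, i.e.\ that $f$ is not of bounded variation. When $\alpha$ is constant this follows from the chord deviation $\max_{x\in J}|f(x)-\ell_J(x)|$, where $\ell_J$ is the chord of $f$ over $J$. All affine terms cancel, so this quantity scales exactly by $a$ under each $L_i$. That gives $\Omega^k_f(I)\ge (Na)^k\max_I|f-b|$, which is positive iff $f$ is not linear. For a merely Lipschitz $\alpha$, the same quantity again picks up an $O(|J|)$ error from the non-affinity of $\alpha$. Overcoming that error is the nontrivial part of the lower bound, and your proposal does not supply it.
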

It is important to note that previous examples have a constant vertical scaling function, i.e., $\alpha(x)\equiv\lambda$, while the main result can compute the bounds on the Assouad spectrum when $\alpha$ is Lipschitz. Here we present a general example, which also includes the calculation of box dimension using Theorem \ref{Ruan}
 \begin{example}\label{ex-affine}
     Let $I=[0,1], N=3$, $x_n=n/3$ for $n=0,1,2,3$. Let $\alpha$ be the vertical scaling function given by \[
     \alpha(x)=\frac{2}{5}+ \frac{\cos(2\pi x)}{4}, \quad x\in [0,1].\] Since $\alpha(x)>0$ for any $x\in[0,1]$, we have $\rho^*=\rho_*=:\rho_{\alpha}$.
    Let $\phi(x)=\cos(2\pi x)$ and define $W_n$ for $n=1,2,3$ by \[
 W_n(x,y)=\bigg(\frac{x+n-1}{N},\alpha(x) y+ \phi\bigg(\frac{x+n-1}{N}\bigg)\bigg)\;  \text{for all}\; x,y\in [0,1].\] Let  $y_0=y_3=\frac{20}{7}$ and $y_1=y_2=\frac{19}{14}$. Then we have \[
 W_n(x_0,y_0)=(x_{n-1},y_{n-1}) \;  \text{and} \;
 W_n(x_3,y_3)=(x_{n},y_{n}). \]
 \begin{figure}[H]
 	\centering
 	
 	\begin{minipage}{0.48\textwidth}
 		\centering
 		 \begin{tikzpicture}
 			\begin{axis}[
 				width=\textwidth,
 				height=6cm,
 				xmin=0,
 				xmax=1,
 				ymin=-1.5,
 				ymax=3,
 				axis lines=left
 				]
 				
 				\addplot[
 				blue,
 				thick,
 				no markers
 				]
 				table {generalized_afif5.dat};
 				
 			\end{axis}
 		\end{tikzpicture}
 		\caption{
 			Generalized affine function with $\alpha(x)=\frac{2}{5}+\frac{cos(2\pi x)}{4}.$
 		}
 	\end{minipage}
 	\hfill
 	\begin{minipage}{0.48\textwidth}
 		\centering
 		\begin{tikzpicture}
 			\begin{axis}[
 				width=\textwidth,
 				height=6cm,
 				axis lines=middle,
 				xlabel={$\theta$},
 				ylabel={$\dim_A^\theta(\Gamma_G)$},
 				xmin=0, xmax=1,
 				ymin=1, ymax=2.2,
 				samples=200,
 				clip=true,
 				]
 				
 				\addplot[darkgray, thick, domain=0.0001:0.5783] 
 				{(1.166-0.558*x)/(1 - x)};
 					\addplot[lightgray, thick, domain=0.0001:0.392] 
 				{1.166+x/(1-x)*0.602*0.226/0.392};
 				
 				\addplot[darkgray, thick, domain=0.5783:1] 
 				{2};
 				\addplot[dashed, thin ,domain=1:1.392] ({0.392}, x);
 				\addplot[dashed, thin ,domain=1:2] ({0.5783}, x);
 				
 			\end{axis}
 		\end{tikzpicture}
 		\caption{Assouad spectrum of the generalized affine function.}
 	\end{minipage}
 	
 \end{figure}
 Therefore, $\{W_n\}_{n=1}^3$ defines a generalized affine FIF. Now we approximate the value of $\rho_{\alpha}$ using the vertical scaling matrices. Here are the matrices $\overline{P}_1$ and $\overline{P}_2$ for the function $\alpha$:
\[
\overline{P}_1 =
\begin{pmatrix}
	0.6500 & 0.5915 & 0.4434 \\
	0.2750 & 0.1651 & 0.2750 \\
	0.4434 & 0.5915 & 0.6500
\end{pmatrix}.
\]
 
\[
\overline{P}_2 =
\begin{pmatrix}
	0.6500 & 0.6433 & 0.6234 & 0 & 0 & 0 & 0 & 0 & 0 \\
	0 & 0 & 0 & 0.5915 & 0.5493 & 0.4990 & 0 & 0 & 0 \\
	0 & 0 & 0 & 0 & 0 & 0 & 0.4434 & 0.3855 & 0.3283 \\
	0.2750 & 0.2284 & 0.1911 & 0 & 0 & 0 & 0 & 0 & 0 \\
	0 & 0 & 0 & 0.1651 & 0.1517 & 0.1651 & 0 & 0 & 0 \\
	0 & 0 & 0 & 0 & 0 & 0 & 0.1911 & 0.2284 & 0.2750 \\
	0.3283 & 0.3855 & 0.4434 & 0 & 0 & 0 & 0 & 0 & 0 \\
	0 & 0 & 0 & 0.4990 & 0.5493 & 0.5915 & 0 & 0 & 0 \\
	0 & 0 & 0 & 0 & 0 & 0 & 0.6234 & 0.6433 & 0.6500
\end{pmatrix}.
\]

From the matrices we obtain $\rho_{\alpha}\approx 1.2$, and therefore from by applying Theorem \ref{Ruan}, we get:
\[
\dim_B(\Gamma_f)=1 + \log_N\rho_{\alpha}\approx 1.1659\ldots
\]
\begin{table}[ht]
	\centering
	\begin{tabular}{cccccccc}
		\toprule
		$\mathbf{k}$ & 1 & 2 & 3 & 4 & 5 & 6 & 7 \\
		\midrule
		$\rho(\overline{P}_k)$
		& 1.3646 & 1.2553 & 1.2183 & 1.2061 & 1.2020 & 1.2006 & 1.2002 \\
		
		$\rho(\underline{P}_k)$
		& 1.0324 & 1.1447 & 1.1816 & 1.1938 & 1.1979 & 1.1993 & 1.1998 \\
		\bottomrule
	\end{tabular}
	\caption{Spectral radii of the matrices $\overline{P}_k$ and $\underline{P}_k$ for different values of $k$.}
	\label{tab:spectral_radii}
\end{table}
 Now $||\alpha||_\infty=\frac{2}{5}+\frac{1}{4}=\frac{13}{20}$. Hence we have the following upper bound
	\[
 \dim_A^\theta(\Gamma_f)\leq
 1.166+\frac{\theta}{1-\theta}\left( 1+\log_3\frac{13}{20} \right)=
 \frac{1.166-0.558\theta}{1-\theta}
 \]
 for $\theta\in(0,0.5783\ldots)$. Since $ ||\alpha||_\infty=\frac{13}{20}<0.834=N^{1-\dim_B(\Gamma_f)}$, the spectrum has a non-trivial lower bound and hence by applying  Theorem \ref{Lowerbound}, we obtain
 \[
  \dim_A^\theta(\Gamma_f)\geq 1.166+ \frac{\theta}{1-\theta}\left(\frac{0.601}{0.392}\right)0.226,
 \; \theta\in(0,0.392\ldots)\] and hence $
  \dim_A(\Gamma_f)\geq 1.392\ldots.
 $
 \end{example}
 \begin{remark}
 Note that in Example \ref{ex-affine}, the values of \(\theta\) for which the upper bound is less than $2$ are not the same as the values of $\theta$ for which the lower bound is less than $2$. This difference occurs because the vertical scaling function is not constant. In Examples \ref{ex-5.1} and \ref{ex-5.3}, the vertical scaling functions were constant, so the corresponding ranges of $\theta$ were the same.
 \end{remark}
\section*{Acknowledgments}
	This work is supported by CSIR-HRDG ASPIRE scheme (grant no-25WS(014)/2023-24/EMR-II/ASPIRE). The first and second authors are thankful to CSIR for the funding. 
\bibliographystyle{amsplain}
	\bibliography{RefAssouad}
\end{document}